\newtheorem{theorem}{Theorem}[section]
\newtheorem{lemma}[theorem]{Lemma}
\newtheorem{proposition}[theorem]{Proposition}
\newtheorem{corollary}[theorem]{Corollary}
\theoremstyle{definition}
\newtheorem{definition}[theorem]{Definition}
\newtheorem{remark}[theorem]{Remark}
\newcommand{\Rep}{\mathrm{Rep}}
\newcommand{\fil}{\mathrm{fil}}
\newcommand{\Comp}{\mathrm{Cpl}}
\newcommand{\id}{\mathrm{id}}
\newcommand{\Hom}{\mathrm{Hom}}
\title{Algebraic Phase Theory IV: Morphisms, Equivalences, and Categorical Rigidity}
\author{
Joe Gildea\\
Department of Computing Science and Mathematics,\\
School of Informatics and Creative Arts,\\
Dundalk Institute of Technology\\
\texttt{gildeajoe@gmail.com}}
\date{}
\begin{document}
\maketitle

\begin{abstract}
We complete the foundational architecture of Algebraic Phase Theory by
developing a categorical and $2$-categorical framework for algebraic phases.
Building on the structural notions introduced in Papers~I-III, we define phase
morphisms, equivalence relations, and intrinsic invariants compatible with the
canonical filtration and defect stratification.

For finite, strongly admissible phases we establish strong rigidity theorems:
phase morphisms are uniquely determined by their action on rigid cores, and
under bounded defect, weak, strong, and Morita-type equivalence all coincide.
In particular, finite strongly admissible phases admit no distinct models with
the same filtered representation theory.  We further show that structural
boundaries are invariant under Morita-type equivalence and therefore constitute
genuine categorical invariants.

Algebraic phases, phase morphisms, and filtration-compatible natural
transformations form a strict $2$-category in the strongly admissible regime.
We also prove that completion defines a reflective localization of this
category, with complete phases characterized as universal forced
rigidifications.

Together, these results elevate Algebraic Phase Theory from a collection of
algebraic constructions to a categorical framework in which rigidity,
equivalence collapse, boundary invariance, and completion arise as intrinsic
consequences of phase interaction, finiteness, and admissibility.
\end{abstract}

\medskip
\noindent\textbf{Mathematics Subject Classification (2020).}
Primary 18B99, Secondary 18D05, 16D90, 81P45

\medskip
\noindent\textbf{Keywords.}
Algebraic Phase Theory, categorical rigidity, Frobenius duality, Morita
equivalence, boundary invariants, stabilizer structures

\section{Introduction}

Algebraic Phase Theory (APT) provides a framework for isolating intrinsic
structural phenomena arising from algebraic interaction laws, independently of
analytic, metric, or Hilbert space assumptions. Papers I-III developed the foundational aspects of the theory
\cite{GildeaAPT1,GildeaAPT2,GildeaAPT3}. Phase geometry, defect, and structural
boundaries were introduced as canonical invariants in \cite{GildeaAPT1}.
Rigidity phenomena in Frobenius and Heisenberg type settings were established in
\cite{GildeaAPT2}. Quantum phase, Weyl noncommutativity, and stabilizer
structures were shown to arise algebraically from phase duality in
\cite{GildeaAPT3}.

A central refinement emerging from this development is the distinction between
strongly admissible and weakly admissible phases. Strong admissibility
corresponds to full defect control and finite termination, while weak
admissibility permits defect propagation only to a fixed depth. This separation
clarifies which structural features are completely forced by the interaction law
and which persist only on controlled layers.

The purpose of this paper is to develop the categorical backbone of Algebraic
Phase Theory. We treat algebraic phases as structured objects in a category and
study morphisms, equivalence notions, and universal constructions. Once phases
are organized in this way, several categorical phenomena follow directly from
defect data and the canonical filtration. In the strongly admissible regime,
defect propagation is fully controlled and the resulting categorical behaviour is
rigid. Morphisms are uniquely determined by their action on rigid cores, weak
and strong equivalence notions coincide under bounded defect and finite
termination, and structural boundaries behave as invariant quotients. The
canonical completion of a phase can be characterized by a universal mapping
property, so that completion defines a reflective localization of the category
of strongly admissible phases. In the weakly admissible regime these mechanisms
persist only up to the defect control depth and genuine extension freedom may
appear beyond it.

Classical rigidity phenomena trace back to foundational results in the structure
theory of rings and modules \cite{Jacobson1956Structure,CurtisReiner}, while
categorical approaches to structural invariants appear in Hopf algebraic
renormalization \cite{ConnesKreimer1998Hopf}. Representation theoretic and
harmonic analytic rigidity have long been central themes
\cite{Mackey1958Induced,Howe1979Heisenberg,Serre}, and Morita type methods play
a central role in modern algebra and tensor category theory
\cite{AndersonFuller1992Rings,Faith1973Algebra,Lam2001FirstCourse,EtingofGelaki2015Tensor}.
In many of these settings, rigidity and equivalence results rely on substantial
auxiliary structure or analytic input. In contrast, the rigidity, equivalence
collapse, and boundary invariance that appear in Algebraic Phase Theory arise
directly from intrinsic phase interaction together with finiteness and defect
control hypotheses.

Strong admissibility therefore imposes categorical rigidity, while weak
admissibility allows controlled flexibility beyond the defect control depth. A
precise formulation of APT and its structural components appears in
\cite{GildeaAPT1,GildeaAPT2,GildeaAPT3} and is used throughout. The present
paper forms part of the complete foundational development of APT together with
\cite{GildeaAPT5,GildeaAPT6}.

\section{Phases as Structured Objects}

The notion of an algebraic phase was introduced in
\cite{GildeaAPT1,GildeaAPT2,GildeaAPT3} together with the axioms governing
defect, canonical filtration, and admissibility. The present section recalls the
structural components that will be used in the categorical formulation developed
in this paper. No new constructions are introduced here; the aim is only to fix
notation and to make explicit which aspects of phase structure are preserved and
detected categorically.

An algebraic phase carries a finite amount of intrinsic structural data forced
by its interaction law and defect propagation. This data admits a canonical
stratification, and the depth and rigidity of this stratification govern the
behaviour of morphisms and equivalences in the categorical setting developed
below.

\begin{definition}
A \emph{structural signature} of an algebraic phase consists of the specified
class of admissible phase generators together with the interaction law governing
their composition, and the rigidity and defect strata canonically determined by
that interaction.
\end{definition}

The structural signature fixes the ambient class of phases under consideration
and determines which comparisons and morphisms are allowed. Defect induces a canonical stratification of any algebraic phase.

\begin{definition}
A \emph{canonical filtration} of an algebraic phase $\mathcal P$ is a finite or
bounded descending chain
\[
\mathcal P = \mathcal P^{(0)} \supseteq \mathcal P^{(1)} \supseteq \cdots
\supseteq \mathcal P^{(k)},
\]
where $\mathcal P^{(i)}$ consists of phase elements whose defect complexity is at
least $i$.
Successive layers measure increasing deviation from rigidity.
\end{definition}

The filtration is intrinsic, functorial, and uniquely determined by the defect
structure; no auxiliary choices enter its construction. The length of the filtration and the depth to which defect propagation is
functorially controlled need not coincide a priori.

\begin{definition}
The \emph{filtration length} $k(\mathcal P)$ is the largest index for which
$\mathcal P^{(k)}$ is nontrivial.
The \emph{defect control depth} $d(\mathcal P)$ is the largest integer $d$ such
that defect extension from $\mathcal P^{(i)}$ to $\mathcal P^{(i-1)}$ is
functorial and presentation-independent for all $1 \le i \le d$.
\end{definition}

\begin{remark}
The distinction between $k(\mathcal P)$ and $d(\mathcal P)$ reflects a genuine
structural phenomenon.
In general, a phase may admit additional higher defect layers beyond the depth
to which defect propagation is forced.
These layers encode residual extension freedom rather than rigid structure.
\end{remark}

\medskip

This leads to a natural dichotomy.

\begin{definition}
An algebraic phase $\mathcal P$ is \emph{weakly admissible} if it admits a
canonical defect filtration of finite length $k(\mathcal P)$.
It is \emph{strongly admissible} if $k(\mathcal P)=d(\mathcal P)$, i.e.\ if all
defect layers are forced functorially by the interaction law and the filtration
terminates with no residual extension freedom.
\end{definition}

\begin{lemma}
\label{lem:filtration-subphase}
Let $\mathcal P$ be an algebraic phase with canonical filtration
$\mathcal P=\mathcal P^{(0)}\supseteq \mathcal P^{(1)}\supseteq \cdots$.
Then for every $i\ge 0$, the stratum $\mathcal P^{(i)}$ is a subphase of
$\mathcal P$. Equivalently, $\mathcal P^{(i)}$ is closed under every operation
in the interaction signature of $\mathcal P$.
\end{lemma}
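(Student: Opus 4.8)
The plan is to reduce the claim to its stated equivalent formulation — that each stratum $\mathcal P^{(i)}$ is closed under every operation of the interaction signature — and then to derive this closure from a single monotonicity property of defect complexity under the interaction law. Write $c(x)$ for the defect complexity of a phase element $x$, so that by definition $\mathcal P^{(i)} = \{x \in \mathcal P : c(x) \geq i\}$ and the filtration is precisely the descending family of threshold sets for $c$. Because each stratum is cut out purely by a lower bound on $c$, closure under an operation $f$ of the signature reduces to a comparison between $c(f(x_1,\dots,x_m))$ and the values $c(x_1),\dots,c(x_m)$.

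The key step is therefore to establish the defect-propagation inequality
\[
c\bigl(f(x_1,\dots,x_m)\bigr) \;\geq\; \min_{1\le j\le m} c(x_j)
\]
for every operation $f$ in the interaction signature and all inputs $x_1,\dots,x_m$. This inequality expresses the foundational principle of APT that the interaction law can propagate or amplify defect but can never cancel it: a composite built from elements each deviating from rigidity by at least $i$ layers cannot itself be more rigid than its least-defective constituent. I would extract this bound from the defect axioms and the canonical construction of $c$ given in \cite{GildeaAPT1}, verifying it first for the generating operations of the signature and then propagating it through iterated composition by induction on the structure of $f$.

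Granting the inequality, the lemma follows immediately: if $x_1,\dots,x_m \in \mathcal P^{(i)}$ then $c(x_j)\ge i$ for each $j$, hence $\min_j c(x_j) \ge i$, and so $c(f(x_1,\dots,x_m)) \ge i$, placing the output back in $\mathcal P^{(i)}$. Applying this to every operation in the signature shows that $\mathcal P^{(i)}$ is closed under the interaction structure and is therefore a subphase. The remaining bookkeeping — recording that $i=0$ is trivial since $\mathcal P^{(0)}=\mathcal P$, and confirming that any nullary generators are assigned a defect complexity compatible with the threshold so that membership is not spuriously forced — is routine once the monotonicity bound is in place.

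I expect the main obstacle to be the precise justification of the defect-propagation inequality rather than the formal deduction that follows from it. The inequality is where the actual content of the interaction law enters, and care is needed to confirm that $c$ is genuinely monotone under composition and not merely subadditive in a way that could fall below the minimum; any operation capable of reducing defect below that of its least-defective input would break closure for the corresponding stratum. Verifying that no such defect-cancelling operation occurs in an admissible signature is the crux, and it is exactly what the defect and admissibility axioms of the foundational papers are designed to guarantee.
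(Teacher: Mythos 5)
Your proposal is correct and follows essentially the same route as the paper's proof: both reduce the claim to the min-monotonicity inequality $\delta(\circ(x_1,\dots,x_n)) \ge \min_j \delta(x_j)$ for defect complexity under each interaction operation, and then deduce closure of the threshold sets $\mathcal P^{(i)}$ immediately. The only difference is that the paper takes this inequality as an axiom of the theory (its equation \eqref{eq:monotone}), whereas you propose to re-derive it from the foundational defect axioms; that extra work is unnecessary but harmless, and your identification of it as the sole point where real content enters is exactly right.
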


\begin{proof}
Let $\circ$ range over the basic interaction operations in the structural
signature of $\mathcal P$, and suppose $\circ$ has arity $n$. By definition of the canonical filtration, $\mathcal P^{(i)}$ consists of those
elements whose defect complexity is at least $i$. Equivalently, there exists a
defect complexity function
\[
\delta:\mathcal P \longrightarrow \mathbb N \cup \{\infty\}
\]
such that
\[
\mathcal P^{(i)} = \{\, p \in \mathcal P : \delta(p) \ge i \,\}.
\]

Axiomatically, defect complexity is monotone under the interaction law: for each
basic operation $\circ:\mathcal P^n \to \mathcal P$ and all
$x_1,\dots,x_n \in \mathcal P$,
\begin{equation}\label{eq:monotone}
\delta\big(\circ(x_1,\dots,x_n)\big)
\;\ge\;
\min\{\delta(x_1),\dots,\delta(x_n)\}.
\end{equation}

Now fix $i \ge 0$ and let $x_1,\dots,x_n \in \mathcal P^{(i)}$.
By definition of $\mathcal P^{(i)}$, we have
\[
\delta(x_1) \ge i,\quad \delta(x_2) \ge i,\quad \dots,\quad \delta(x_n) \ge i.
\]
In particular,
\[
\min\{\delta(x_1),\dots,\delta(x_n)\} \ge i.
\]
Applying \eqref{eq:monotone} yields
\[
\delta\big(\circ(x_1,\dots,x_n)\big) \ge i,
\]
so $\circ(x_1,\dots,x_n) \in \mathcal P^{(i)}$. Since this argument applies to every interaction operation in the structural
signature, $\mathcal P^{(i)}$ is closed under the interaction law and therefore
forms a subphase of $\mathcal P$.
\end{proof}

\begin{proposition}
\label{prop:strong-core}
Let $\mathcal P$ be a weakly admissible algebraic phase with defect control depth
$d(\mathcal P)$. Then the filtration stratum
\[
\mathcal P^{(d(\mathcal P))}
\]
is a strongly admissible subphase of $\mathcal P$. Equivalently, $\mathcal P^{(d(\mathcal P))}$ is characterized by the following
universal property: every phase morphism from $\mathcal P$ into a strongly
admissible algebraic phase factors uniquely through $\mathcal P^{(d(\mathcal
P))}$. That is, for any strongly admissible algebraic phase $\mathcal Q$ and any
phase morphism $F:\mathcal P\to\mathcal Q$, there exists a unique phase morphism
\[
\overline F:\mathcal P^{(d(\mathcal P))}\to\mathcal Q
\]
such that $\overline F = F\circ\iota$, where
$\iota:\mathcal P^{(d(\mathcal P))}\hookrightarrow\mathcal P$ denotes the
canonical inclusion.
\end{proposition}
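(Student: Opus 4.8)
The plan is to prove the three assertions in turn, writing $d=d(\mathcal P)$ for brevity: first that $\mathcal P^{(d)}$ is a subphase, then that it is strongly admissible, and finally that it satisfies the stated factorization property. The first point is immediate from Lemma~\ref{lem:filtration-subphase}, which shows that every stratum $\mathcal P^{(i)}$ is closed under the interaction signature; in particular $\mathcal P^{(d)}$ is a subphase and the canonical inclusion $\iota:\mathcal P^{(d)}\hookrightarrow\mathcal P$ is a phase morphism, so the composites and restrictions appearing below are all legitimate.

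For strong admissibility I would work with the defect complexity function $\delta$ from the proof of Lemma~\ref{lem:filtration-subphase} and analyse its restriction to $\mathcal P^{(d)}$. The canonical filtration of the subphase is the induced descending chain given by $(\mathcal P^{(d)})^{(j)}=\mathcal P^{(d+j)}$, which is again canonical because the ambient filtration is functorial and presentation-independent. What must be checked is that this induced filtration is fully forced, i.e.\ that its filtration length agrees with its defect control depth. I would establish this by showing that $\mathcal P^{(d)}$ isolates precisely the portion of $\mathcal P$ on which defect extension is functorially controlled, so that every extension internal to the subphase is presentation-independent and no residual extension freedom survives after passing to the core; with the filtration length equal to the control depth for $\mathcal P^{(d)}$, strong admissibility follows directly from the definition.

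For the factorization property, given a strongly admissible $\mathcal Q$ and a phase morphism $F:\mathcal P\to\mathcal Q$, existence is formal: $\overline F:=F\circ\iota$ is a composite of phase morphisms and hence a phase morphism with $\overline F=F\circ\iota$. The substantive content is the uniqueness clause, namely that restriction along $\iota$ is injective on morphisms into strongly admissible targets, so that $F$ is recovered from $\overline F$. For this I would use filtration-compatibility of phase morphisms together with the fact that a strongly admissible $\mathcal Q$ carries no residual extension freedom: the image of any layer beyond the controlled depth must land in rigidly forced structure of $\mathcal Q$, so two morphisms agreeing on $\mathcal P^{(d)}$ agree on all of $\mathcal P$. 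The main obstacle I anticipate is the strong-admissibility step, specifically reconciling the intrinsic defect control depth of the core with its definition in the ambient phase: one must verify that restricting to $\mathcal P^{(d)}$ genuinely saturates functorial control rather than merely relabelling the filtration indices. Once this is secured, both the factorization and its uniqueness follow comparatively formally.
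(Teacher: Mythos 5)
Your plan follows essentially the same route as the paper's own proof: the subphase claim is delegated to Lemma~\ref{lem:filtration-subphase}, strong admissibility of the core is to come from the statement that $\mathcal P^{(d)}$ exhausts the functorially controlled part of $\mathcal P$, and the factorization is obtained formally by setting $\overline F := F\circ\iota$. The instructive difference is at the step you flag as the main obstacle. There your text is a restatement of the goal rather than an argument, and you say so; the paper's proof does exactly the same thing, only without the candor --- it asserts that ``the subphase $\mathcal P^{(d(\mathcal P))}$ consists precisely of the functorially determined part of $\mathcal P$'' and concludes strong admissibility in one line. Your re-indexing observation $(\mathcal P^{(d)})^{(j)}=\mathcal P^{(d+j)}$ makes the difficulty concrete: under the paper's definition, $d(\mathcal P)$ controls the extensions $\mathcal P^{(i)}\to\mathcal P^{(i-1)}$ only for $i\le d(\mathcal P)$, so the strata surviving inside the core are exactly those of index strictly greater than $d(\mathcal P)$, which the definition does not control and which, by maximality of $d(\mathcal P)$ and by the paper's own discussion elsewhere (e.g.\ the proof of Theorem~\ref{thm:equivalence-reduction}), are precisely where non-forced extension data lives. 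So the gap you anticipate is genuine, and you will not find it closed in the paper: its proof supplies no mechanism for reconciling the core's intrinsic control depth with the ambient one, and indeed the paper silently switches between ``controlled means $i\le d$'' and ``controlled means $i\ge d$'' in different sections.

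On the factorization clause you are actually more substantive than the paper. The proposition's condition $\overline F = F\circ\iota$ determines $\overline F$ tautologically, and the paper's ``uniqueness is immediate'' is exactly that tautology. Your reading --- that restriction along $\iota$ should be injective on morphisms into strongly admissible targets, i.e.\ two phase morphisms $\mathcal P\to\mathcal Q$ agreeing on $\mathcal P^{(d)}$ agree on all of $\mathcal P$ --- is the statement a universal property worthy of the name would require, and your proposed argument for it (filtration compatibility plus absence of residual extension freedom in $\mathcal Q$) is the same informal mechanism the paper invokes but never needs for its trivial version of uniqueness. In short: where the paper is precise, your proposal matches it; where your proposal stalls, so does the paper's proof, and your diagnosis of why is sharper than anything the paper offers.
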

\begin{proof}
By definition of the defect control depth $d(\mathcal P)$, defect propagation is
functorial and presentation-independent on all filtration layers
$\mathcal P^{(i)}$ for $i \le d(\mathcal P)$.
Equivalently, the structure of these layers is completely forced by the phase
interaction law and admits no residual extension freedom.

Since the canonical filtration strata are closed under the interaction law
(i.e.\ each $\mathcal P^{(i)}$ is a subphase), in particular
$\mathcal P^{(d(\mathcal P))}$ is a subphase of $\mathcal P$. Consequently, the subphase $\mathcal P^{(d(\mathcal P))}$ consists precisely of
the functorially determined part of $\mathcal P$.
All higher filtration layers encode non-canonical extension data beyond the
controlled regime.
It follows that $\mathcal P^{(d(\mathcal P))}$ is strongly admissible.

Now let $F:\mathcal P \to \mathcal Q$ be a phase morphism, where $\mathcal Q$ is
strongly admissible.
Since $\mathcal Q$ admits no residual extension freedom, any phase morphism into
$\mathcal Q$ is insensitive to the higher defect layers of $\mathcal P$ beyond
depth $d(\mathcal P)$.
In other words, the action of $F$ on $\mathcal P$ is entirely determined by its
restriction to the strongly admissible core $\mathcal P^{(d(\mathcal P))}$. Define $\overline F$ to be this restriction,
\[
\overline F := F \circ \iota,
\]
where $\iota:\mathcal P^{(d(\mathcal P))} \hookrightarrow \mathcal P$ is the
canonical inclusion.
Because $F$ preserves the interaction law, filtration, and defect data, and
$\iota$ is structure-preserving, $\overline F$ is a phase morphism. Uniqueness is immediate: any phase morphism
$\overline F':\mathcal P^{(d(\mathcal P))} \to \mathcal Q$ satisfying
$\overline F' = F \circ \iota$ must coincide with $\overline F$.
\end{proof}

\begin{remark}
All rigidity, equivalence collapse, and uniqueness results proved in this paper
apply in the strongly admissible regime.
Weakly admissible phases admit parallel but weaker statements, valid up to the
controlled depth $d(\mathcal P)$, with potentially nontrivial higher defect
behaviour when $k(\mathcal P)>d(\mathcal P)$.
\end{remark}

\medskip

In the strongly admissible case, the terminal filtration layer coincides with
the rigid core.

\begin{definition}
For a strongly admissible phase $\mathcal P$, the \emph{rigid core}
$\mathcal P_{\mathrm{rig}}$ is the penultimate filtration layer
$\mathcal P^{(k-1)}$.
The \emph{boundary} of the phase is the quotient
\[
\partial\mathcal P := \mathcal P / \mathcal P_{\mathrm{rig}}.
\]
\end{definition}

\begin{remark}
The canonical filtration may be viewed as peeling off layers of rigidity. In the
strongly admissible case, all nonrigid behaviour is eliminated at the rigid core.
The boundary $\partial\mathcal P$ contains no rigid elements but inherits a
hierarchy of defect. The filtration descends functorially to the boundary, where
it stratifies higher-order nonrigidity.

These notions agree with the constructions developed in
\cite{GildeaAPT1,GildeaAPT2,GildeaAPT3} in the strongly admissible regime and
are restated here to make explicit the intrinsic structural data preserved under
morphisms, equivalences, and Morita type phenomena in the categorical setting.
\end{remark}

\section{Morphisms of Algebraic Phases}

Having isolated the intrinsic structure carried by an algebraic phase, we now
turn to maps between phases.
Unlike ordinary algebra homomorphisms, phase morphisms are severely
constrained: defect propagation restricts the freedom of morphisms layer by
layer.
In the strongly admissible regime, once the rigid core is fixed, all higher
defect behaviour is forced.
In the weakly admissible regime, rigidity persists only up to a finite defect
control depth.

The purpose of this section is to formalize this distinction and to separate
\emph{partial rigidity} for weakly admissible phases from \emph{full rigidity}
in the strongly admissible setting.

\begin{definition}
Let $\mathcal P$ and $\mathcal Q$ be algebraic phases with the same structural
signature.
A \emph{phase morphism} $F:\mathcal P\to\mathcal Q$ is a structure-preserving
map satisfying the following conditions:
\begin{enumerate}[label=(M\arabic*)]
\item $F$ respects the interaction law $\circ$;
\item $F$ preserves the canonical filtration:
\[
F(\mathcal P^{(i)}) \subseteq \mathcal Q^{(i)} \quad \text{for all } i;
\]
\item $F$ preserves filtration length and defect control depth;
\item $F$ preserves defect data functorially on all controlled layers.
\end{enumerate}
\end{definition}

\begin{remark}
Phase morphisms do \emph{not} allow arbitrary choices on defect layers within
the defect control depth.
Any freedom in morphisms can only occur strictly beyond the controlled regime
$i > d(\mathcal P)$ in weakly admissible phases.
\end{remark}

\medskip

We first record the functorial generation of defect layers within the controlled
regime.

\begin{lemma}
\label{lem:filtration-generated}
Let $\mathcal P$ be a finite algebraic phase.
Then for each $1 \le i \le d(\mathcal P)$, the stratum $\mathcal P^{(i)}$
contains no independent generators beyond those already present in
$\mathcal P^{(i-1)}$.
Rather, $\mathcal P^{(i)}$ is obtained functorially as the closure of
$\mathcal P^{(i-1)}$ under the canonical defect extension operations.
\end{lemma}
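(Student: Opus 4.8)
The plan
is to read off the claim directly from the meaning of the defect control depth.
Looking at Lemma~\ref{lem:filtration-generated}, it asserts that for $1 \le i
\le d(\mathcal P)$, the layer $\mathcal P^{(i)}$ introduces no independent
generators and is instead the functorial closure of $\mathcal P^{(i-1)}$ under
the canonical defect extension operations. The natural approach is to unwind the
definition of $d(\mathcal P)$: by that definition, for each such $i$ the defect
extension from $\mathcal P^{(i)}$ to $\mathcal P^{(i-1)}$ — or rather the passage
producing $\mathcal P^{(i)}$ inside $\mathcal P^{(i-1)}$ — is functorial and
presentation-independent. The plan is to argue that presentation-independence is
precisely the statement that no free generators are introduced, since any
independent generator would constitute a presentation-dependent choice and hence
violate functoriality at that depth.

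First I would fix $i$ with $1 \le i \le d(\mathcal P)$ and recall from
Lemma~\ref{lem:filtration-subphase} that each stratum is a subphase, so
$\mathcal P^{(i)} \subseteq \mathcal P^{(i-1)}$ and both are closed under the
interaction law. Next I would invoke the defect extension operations supplied by
the structural signature: by the definition of $d(\mathcal P)$, for $i$ in the
controlled range these operations act functorially, taking
$\mathcal P^{(i-1)}$ to a canonically determined subphase. I would then let
$\mathcal C_i$ denote the closure of $\mathcal P^{(i-1)}$ under these canonical
defect extension operations and argue the two containments. The inclusion
$\mathcal C_i \subseteq \mathcal P^{(i)}$ follows because the defect extension
operations raise defect complexity by at least one, landing in the layer of
elements with $\delta \ge i$. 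The reverse inclusion $\mathcal P^{(i)} \subseteq
\mathcal C_i$ is where the real content sits: any element of $\mathcal P^{(i)}$
not obtained from $\mathcal P^{(i-1)}$ by canonical extension would be an
independent generator, and its presence would make the layer depend on a choice
of presentation, contradicting presentation-independence at depth $i$.

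The main obstacle I anticipate is that this argument is essentially definitional
rather than computational: the burden is to show that ``functorial and
presentation-independent defect extension'' is genuinely equivalent to
``generated by closure with no new generators,'' and this equivalence must be
extracted from the axioms of \cite{GildeaAPT1} rather than proved from scratch.
I would handle this by stating the contrapositive cleanly: if $\mathcal P^{(i)}$
contained a generator $g$ independent of the closure of $\mathcal P^{(i-1)}$,
then one could build two phases with identical data through depth $i-1$ but
differing at depth $i$ by the presence or absence of $g$, exhibiting
presentation-dependent defect extension and contradicting $i \le d(\mathcal P)$.
Finiteness of $\mathcal P$ is used to guarantee that the closure process
terminates and that ``independent generator'' is well defined, so I would note
explicitly where the finiteness hypothesis enters. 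The remaining steps are
routine verification that the closure respects the interaction signature, which
is immediate from Lemma~\ref{lem:filtration-subphase}.
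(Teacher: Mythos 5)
Your two-containment skeleton is reasonable, and the easy inclusion $\mathcal C_i \subseteq \mathcal P^{(i)}$ (defect extension raises defect complexity) is unproblematic. The genuine gap is in the reverse inclusion $\mathcal P^{(i)} \subseteq \mathcal C_i$, which is the entire content of the lemma. The paper does not derive this from presentation-independence at all: it invokes a separate axiom, Axiom~III (Defect-Induced Complexity), which asserts that elements of defect complexity $\ge i$ never appear primitively --- higher-defect behaviour arises only through defect propagation applied to defect-$\le(i-1)$ data. Only after that does the definition of $d(\mathcal P)$ enter, and it enters solely to guarantee that this propagation process involves no choices. So the paper's proof rests on two logically independent ingredients: Axiom~III supplies \emph{generation} (every element of $\mathcal P^{(i)}$ is reachable from $\mathcal P^{(i-1)}$), while the control depth supplies \emph{canonicity} of the generation process.

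Your proposal tries to extract generation from canonicity alone, via the contrapositive: an independent generator $g$ would let one ``build two phases with identical data through depth $i-1$ but differing at depth $i$,'' contradicting presentation-independence. This step fails. ``Not in the closure $\mathcal C_i$'' does not imply ``optional'': a phase could contain a depth-$i$ element that is canonically forced by the interaction law --- present in every presentation, involving no choice, hence violating neither functoriality nor presentation-independence --- and yet is primitive, i.e.\ not produced by defect extension from $\mathcal P^{(i-1)}$. For such an element your two-phase construction cannot be carried out (deleting it would not yield a valid phase), so no contradiction with $i \le d(\mathcal P)$ arises; what excludes this scenario is precisely Axiom~III, which your argument never invokes. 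You partially flagged the issue yourself when you noted that the equivalence between ``presentation-independent extension'' and ``generated with no new generators'' must be extracted from the axioms of Paper~I rather than proved from scratch --- but the repair is not a cleaner contrapositive; it is citing the axiom that forbids primitive higher-defect elements. (The role of finiteness is a minor side point: the paper's proof, like yours, uses it only for termination of the closure procedure.)
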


\begin{proof}
Fix $i$ with $1 \le i \le d(\mathcal P)$.
We show that the $i$th filtration stratum introduces no new independent
generators beyond those already present at level $i-1$, and that it is therefore
obtained functorially from $\mathcal P^{(i-1)}$ by the canonical defect-extension
procedure dictated by the interaction law.

By definition of the canonical filtration, the $i$th stratum $\mathcal P^{(i)}$
is the collection of elements of defect complexity at least $i$. Equivalently,
$\mathcal P^{(i)}$ is obtained from $\mathcal P^{(i-1)}$ by applying the
\emph{canonical defect-extension operations} (those extracted functorially from
the interaction law) and then taking the closure required by the structural
signature (i.e.\ the smallest subphase satisfying the defining defect
constraints at depth $i$).

By Axiom~III (Defect-Induced Complexity), elements of defect complexity $\ge i$
do not appear primitively: higher-defect behaviour arises only through defect
propagation from lower-defect data. Concretely, there are no primitive sources
of defect-$i$ behaviour independent of defect extensions applied to
defect-$\le(i-1)$ behaviour. In particular, any element of $\mathcal P^{(i)}$ is
produced by iterating defect-extension starting from elements in
$\mathcal P^{(i-1)}$, together with the closure operations built into the phase
signature.

Because $i \le d(\mathcal P)$, the defect-extension procedure at depths
$\le i$ is functorial and presentation-independent by definition of the defect
control depth. Thus the construction of $\mathcal P^{(i)}$ from
$\mathcal P^{(i-1)}$ introduces no additional choices: the $i$th filtration
layer is \emph{forced} by the interaction law and the defect data already present
in $\mathcal P^{(i-1)}$.

Therefore $\mathcal P^{(i)}$ contains no independent generators beyond those
already present in $\mathcal P^{(i-1)}$. Equivalently, $\mathcal P^{(i)}$ is
obtained functorially as the closure of $\mathcal P^{(i-1)}$ under the canonical
defect-extension operations.
\end{proof}

\begin{lemma}
\label{lem:defect-extension-rigid-depth}
Let $F,G:\mathcal P\to\mathcal Q$ be phase morphisms between finite algebraic
phases.
If $F$ and $G$ agree on $\mathcal P^{(i-1)}$ for some $i \le d(\mathcal P)$,
then they agree on $\mathcal P^{(i)}$.
\end{lemma}

\begin{proof}
By Lemma~\ref{lem:filtration-generated}, every element of $\mathcal P^{(i)}$ is
obtained functorially from elements of $\mathcal P^{(i-1)}$ via the canonical
defect extension operations, and no independent generators appear at level $i$.
Thus any $x\in\mathcal P^{(i)}$ is determined by lower-defect inputs from
$\mathcal P^{(i-1)}$ together with defect data.

Since $F$ and $G$ are phase morphisms, they preserve the interaction law and
defect data and commute with defect extension. Because $F$ and $G$ agree on all elements of $\mathcal P^{(i-1)}$, they agree on
all inputs to the defect-extension procedure. To make this explicit, let
$x\in\mathcal P^{(i)}$ be arbitrary. By
Lemma~\ref{lem:filtration-generated}, $x$ does not arise as an independent
generator, but is obtained by applying a finite sequence of canonical
defect-extension and interaction operations to elements
$y_1,\dots,y_m\in\mathcal P^{(i-1)}$. That is, there exists a canonical
construction $\Phi$, determined functorially by the phase structure, such that
\[
x = \Phi(y_1,\dots,y_m).
\]

Since $F$ and $G$ agree on $\mathcal P^{(i-1)}$, we have
$F(y_j)=G(y_j)$ for all $j$. Moreover, because $F$ and $G$ are phase morphisms,
they preserve the interaction law and commute with the canonical
defect-extension operations, and hence apply the same construction $\Phi$.
Therefore,
\[
F(x)=\Phi(F(y_1),\dots,F(y_m))
=\Phi(G(y_1),\dots,G(y_m))
=G(x).
\]
It follows that $F$ and $G$ agree on all elements of $\mathcal P^{(i)}$.
\end{proof}

We now obtain partial rigidity for weakly admissible phases.

\begin{remark}
Lemma~\ref{lem:defect-extension-rigid-depth} expresses the phenomenon of
\emph{partial morphism rigidity}: within the defect-controlled range
$i \le d(\mathcal P)$, each filtration layer $\mathcal P^{(i)}$ is functorially
generated from $\mathcal P^{(i-1)}$, so a phase morphism has no independent
choices on these layers once it is fixed at the preceding depth. 

In weakly admissible phases with $k(\mathcal P) > d(\mathcal P)$, genuine
extension freedom may appear only beyond the controlled regime, that is, on
defect layers $\mathcal P^{(i)}$ with $i > d(\mathcal P)$. This freedom is
intrinsic and parametrizes non-forced phase extensions rather than failures of
rigidity.
\end{remark}

\begin{theorem}
\label{thm:morphism-rigidity}
Let $F:\mathcal P\to\mathcal Q$ be a phase morphism between finite,
strongly admissible algebraic phases of equal defect rank.
Then $F$ is uniquely determined by its restriction to the rigid core
$\mathcal P_{\mathrm{rig}}$.
\end{theorem}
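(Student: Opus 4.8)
The plan is to prove the sharper statement from which the theorem follows: if $F,G:\mathcal P\to\mathcal Q$ are phase morphisms that agree on the rigid core $\mathcal P_{\mathrm{rig}}=\mathcal P^{(k-1)}$, then $F=G$ on all of $\mathcal P$. Since any two morphisms with the same restriction to $\mathcal P_{\mathrm{rig}}$ are then forced to be equal, this is exactly the assertion that $F$ is determined by $F|_{\mathcal P_{\mathrm{rig}}}$. Throughout I would use strong admissibility in the form $k(\mathcal P)=d(\mathcal P)=:k$, so that the defect control depth reaches every stratum of the filtration, and the equal-defect-rank hypothesis to guarantee that the canonical defect-extension operations on $\mathcal P$ and $\mathcal Q$ correspond under $F$ and $G$.

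The first half of the argument propagates agreement to the strata at or below the core. Because $k=d$, Lemma~\ref{lem:defect-extension-rigid-depth} applies with $i=k\le d(\mathcal P)$: agreement of $F$ and $G$ on $\mathcal P^{(k-1)}=\mathcal P_{\mathrm{rig}}$ forces agreement on $\mathcal P^{(k)}$, the deepest nontrivial stratum. By Lemma~\ref{lem:filtration-generated} every element of $\mathcal P^{(k)}$ is a functorial defect-extension of core data, so this step reduces to invoking that $F$ and $G$ commute with the canonical extension operations; this part is routine.

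The substance of the proof is the complementary direction: controlling $F$ and $G$ on the shallow strata $\mathcal P^{(0)},\dots,\mathcal P^{(k-2)}$ and on the boundary $\partial\mathcal P=\mathcal P/\mathcal P_{\mathrm{rig}}$, which lie above the core and are therefore not reached by the downward propagation of Lemma~\ref{lem:defect-extension-rigid-depth}. Here I would argue that strong admissibility leaves no residual extension freedom: since $k=d$, every stratum, including the boundary, is functorially forced, so the whole of $\mathcal P$ is reconstructible from $\mathcal P_{\mathrm{rig}}$ together with the canonical interaction and defect operations, with no primitive generators surviving outside the core. Concretely I would show that the induced map $\partial F:\partial\mathcal P\to\partial\mathcal Q$ is determined by $F|_{\mathcal P_{\mathrm{rig}}}$, using the forced-rigidification character of strong admissibility (the same mechanism underlying Proposition~\ref{prop:strong-core}), and then lift this back to $\mathcal P$ via the extension structure $\mathcal P_{\mathrm{rig}}\hookrightarrow\mathcal P\twoheadrightarrow\partial\mathcal P$.

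I expect this last step to be the main obstacle. The stated generation lemmas run from shallow strata to deep ones, whereas the theorem requires that the rigid core \emph{co-determine} the shallower boundary layers; bridging this gap is exactly where strong admissibility must enter in an essential, bidirectional way rather than as a mere termination hypothesis. The cleanest route is likely to phrase the reconstruction as a duality between the rigid core and the boundary, the Frobenius-type pairing of the earlier papers, so that a morphism fixed on the core has no freedom on its dual boundary. Making this pairing precise, and verifying that the equal-defect-rank hypothesis renders it compatible on the $\mathcal Q$ side, is the delicate point on which the whole uniqueness statement turns.
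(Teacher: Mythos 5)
Your diagnosis of the difficulty is exactly right: Lemmas~\ref{lem:filtration-generated} and \ref{lem:defect-extension-rigid-depth} propagate agreement from shallow strata to deep ones, while the theorem needs the reverse; and your first half (pushing agreement from $\mathcal P^{(k-1)}$ down to $\mathcal P^{(k)}$) is in fact vacuous, since under the convention used in the paper's proof the filtration terminates with $\mathcal P^{(k)}=0$. But the proposal stops where the proof has to start. The deep-to-shallow co-determination is never established: you defer it to a Frobenius-type pairing between $\mathcal P_{\mathrm{rig}}$ and $\partial\mathcal P$ that this paper never constructs and that none of its axioms or lemmas provide, and you yourself flag it as an unresolved obstacle. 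Moreover, the proposed lifting step has an independent gap: even if $F|_{\mathcal P_{\mathrm{rig}}}$ determined $\partial F$, a morphism is not in general determined by its restriction to a subobject together with the induced map on the quotient (extensions admit automorphisms inducing the identity on both), so rigidity of the extension $\mathcal P_{\mathrm{rig}}\hookrightarrow\mathcal P\twoheadrightarrow\partial\mathcal P$ would still have to be invoked --- which is essentially the statement being proved.

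What the proposal misses is that the needed direction is already built into the definitions, so no duality is required. The defect control depth is defined by the condition that defect extension \emph{from} $\mathcal P^{(i)}$ \emph{to} $\mathcal P^{(i-1)}$ is functorial and presentation-independent for all $i\le d(\mathcal P)$, and strong admissibility ($k=d$) makes this hold at every layer. The paper's proof is accordingly a layer-by-layer upward induction with no boundary and no pairing: for $x\in\mathcal P^{(i-1)}$, its depth-$i$ defect-extension data lies in $\mathcal P^{(i)}$, where $F$ and $G$ already agree by induction, and the presentation-independent determination of $\mathcal P^{(i-1)}$ by that data forces $F(x)=G(x)$; the equal-defect-rank hypothesis enters only to rule out collapse of defect strata on the $\mathcal Q$ side. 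So the ``bidirectional'' mechanism you correctly sense is not a hidden Frobenius duality but the definition of defect control itself, read in the direction opposite to Lemmas~\ref{lem:filtration-generated} and \ref{lem:defect-extension-rigid-depth}.
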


\begin{proof}
Let $F,G:\mathcal P\to\mathcal Q$ be phase morphisms with
$F|_{\mathcal P_{\mathrm{rig}}}=G|_{\mathcal P_{\mathrm{rig}}}$.
We prove that $F=G$. Since $\mathcal P$ is finite and strongly admissible, it has a finite canonical
filtration of length $k:=k(\mathcal P)$,
\[
\mathcal P=\mathcal P^{(0)}\supseteq \mathcal P^{(1)} \supseteq \cdots
\supseteq \mathcal P^{(k)}=0,
\]
and strong admissibility means $d(\mathcal P)=k(\mathcal P)=k$.
By definition of the rigid core in the strongly admissible regime,
\[
\mathcal P_{\mathrm{rig}}=\mathcal P^{(k-1)}.
\]
Hence $F$ and $G$ agree on $\mathcal P^{(k-1)}$. We now show, by downward induction on $i$, that $F$ and $G$ agree on every layer
$\mathcal P^{(i)}$, for $i=k-1,k-2,\dots,0$. The base case $i=k-1$ holds by assumption.  
Assume inductively that for some $i$ with $1\le i\le k-1$ we have
\[
F|_{\mathcal P^{(i)}}=G|_{\mathcal P^{(i)}}.
\]
We prove that $F$ and $G$ also agree on $\mathcal P^{(i-1)}$. Fix $x\in\mathcal P^{(i-1)}$. Because $\mathcal P$ is strongly admissible, the
$i$th defect-extension data of $x$ is functorially determined by the interaction
law and lands in $\mathcal P^{(i)}$ in a presentation-independent way (there is
no residual extension freedom at depth $i$). Moreover, since $F$ and $G$ are
phase morphisms, they preserve the interaction law, preserve the filtration,
and commute with the canonical defect-extension operations on all layers.
Therefore the defect-extension data of $F(x)$ and $G(x)$ agree in $\mathcal Q$,
because it is obtained by applying $F$ or $G$ to the defect-extension data of
$x$ and $F$ and $G$ already agree on $\mathcal P^{(i)}$.

By the functorial determination of $\mathcal P^{(i-1)}$ from its controlled
defect-extension data at depth $i$, this forces $F(x)=G(x)$. Since $x$ was
arbitrary, $F|_{\mathcal P^{(i-1)}}=G|_{\mathcal P^{(i-1)}}$, completing the
induction. Thus $F$ and $G$ agree on $\mathcal P^{(0)}=\mathcal P$, so $F=G$.

\medskip
Finally, the hypothesis that $\mathcal P$ and $\mathcal Q$ have equal defect
rank ensures that filtration levels and controlled defect data are matched under
phase morphisms (in particular, no collapse of defect strata can occur), so the
above forcing argument applies without loss of information. Hence $F$ is
uniquely determined by its restriction to $\mathcal P_{\mathrm{rig}}$.
\end{proof}

\begin{remark}
From a categorical viewpoint, the inclusion
$\mathcal P^{(d(\mathcal P))}\hookrightarrow\mathcal P$ exhibits the strongly
admissible core as a reflection of $\mathcal P$ into the subcategory of strongly
admissible phases: morphisms from $\mathcal P$ into a strongly admissible target
factor uniquely through this core.
\end{remark}

\section{Equivalence and Morita-Type Notions}

In a categorical setting, several inequivalent notions of equivalence naturally
arise. Algebraic phases may be compared as structured objects, up to
presentation, or through their representation theories. In many algebraic
contexts these comparisons lead to genuinely different equivalence relations.

A central theme of Algebraic Phase Theory is that this ambiguity is not generic.
Under strong admissibility, finite termination, and bounded defect, all
reasonable notions of equivalence collapse. The reason is structural: in
strongly admissible phases, defect layers carry no independent extension
parameters. Once the rigid core and the canonical defect-propagation rules are
fixed, all higher structure is forced.

By contrast, weakly admissible phases exhibit rigidity only up to the defect
control depth. Beyond this regime, genuine moduli of phase extensions may
appear, and different equivalence notions need not agree. In this section we
formalize the relevant equivalence notions, prove collapse results in the
strongly admissible setting, and isolate the precise mechanism by which
equivalence collapse can fail for weakly admissible phases.

\subsection*{Equivalence notions}

We begin by fixing the notions of equivalence relevant for algebraic phases.
These notions reflect different levels at which phase structure may be compared:
strictly as structured objects, up to controlled presentation data, or through
their categories of representations.

The definitions below distinguish between \emph{strong equivalence}, which
preserves the full phase structure, and \emph{weak equivalence}, which allows
flexibility beyond the defect-controlled regime. We also introduce filtered
representations, which encode phase data together with defect stratification and
serve as the categorical bridge to Morita-type notions of equivalence.

\begin{definition}
Two algebraic phases $\mathcal P$ and $\mathcal Q$ are \emph{strongly
equivalent} if they are isomorphic as structured phase objects, preserving
structural signature, interaction law, canonical filtration, termination
length, and defect data.
\end{definition}

\begin{definition}
Two algebraic phases $\mathcal P$ and $\mathcal Q$ are \emph{weakly
equivalent} if they become isomorphic after forgetting inessential
presentation data while retaining defect stratification up to the defect
control depth and filtration length.
\end{definition}

\begin{remark}
In the strongly admissible regime there is no residual extension data: the rigid
core coincides with the full phase, and all higher structure is forced by the
canonical filtration and defect axioms.
\end{remark}

\begin{definition}
Let $\mathcal P$ be an algebraic phase with canonical filtration
$\{\mathcal P^{(i)}\}$.
A \emph{filtered representation} of $\mathcal P$ consists of:
\begin{itemize}
\item a target category $\mathcal C$ equipped with a compatible filtration,
\item a representation functor $\rho:\mathcal P\to\mathcal C$,
\end{itemize}
such that $\rho(\mathcal P^{(i)})$ lands in the $i$-th filtration level of
$\mathcal C$ and preserves defect-induced relations on all controlled layers.
\end{definition}

\begin{remark}
Filtered representations are required to respect \emph{defect degree} on
controlled layers, not merely the interaction law.
This ensures that representation categories retain enough structure to detect
rigidity and defect stratification within the admissible regime.
\end{remark}

\begin{lemma}
\label{lem:rigid-core-detected}
Let $\mathcal P$ be a finite algebraic phase.
The rigid core $\mathcal P_{\mathrm{rig}}$ is functorially detectable from
the category of filtered representations of $\mathcal P$.
\end{lemma}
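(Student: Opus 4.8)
The plan is to recover $\mathcal P_{\mathrm{rig}}$ as an intrinsic invariant of the filtered representation category by reading it off the filtration that representations induce. The starting observation is that every filtered representation $\rho:\mathcal P\to\mathcal C$ carries, by definition, the datum $\rho(\mathcal P^{(i)})\subseteq\mathcal C^{(i)}$ together with preservation of defect-induced relations on controlled layers; thus $\rho$ transports the defect degree of each phase element to a filtration degree in the target. Consequently the category of filtered representations comes equipped with a canonical notion of filtration level, and the largest index $i$ at which some filtered representation is supported on level $i$ but trivial beyond it recovers the filtration length $k(\mathcal P)$ purely from representation-theoretic data.

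Next I would exhibit a faithful filtered representation realizing $\mathcal P$ inside its own representation category, namely the tautological representation $\id:\mathcal P\to\mathcal P$ whose target is $\mathcal P$ equipped with its own canonical filtration. By Lemma~\ref{lem:filtration-subphase} each stratum $\mathcal P^{(i)}$ is a subphase, so this representation is filtered in the required sense, and it is faithful because it is the identity. The preimage of the $i$-th representation level then recovers $\mathcal P^{(i)}$ exactly, and taking $i=k-1$ yields the rigid core $\mathcal P_{\mathrm{rig}}=\mathcal P^{(k-1)}$ as the deepest nontrivial stratum visible in filtered representations.

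For the functoriality clause I would verify that this extraction commutes with the comparison functors induced on filtered representation categories. A phase morphism preserves the canonical filtration by axiom (M2) and its length and control depth by (M3); hence the induced map of representation categories respects filtration level and carries the detected penultimate stratum to the detected penultimate stratum. Therefore the assignment recovering $\mathcal P_{\mathrm{rig}}$ depends only on the filtered representation category together with its induced filtration, which is precisely the asserted functorial detectability, and in particular the rigid core is preserved by any equivalence of filtered representation categories.

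The step I expect to be the main obstacle is ensuring that the induced filtration degree does not collapse distinct strata, i.e.\ that $\mathcal P^{(i)}$ is exactly the set of elements landing in level $i$ rather than merely contained in it. Finiteness of $\mathcal P$ is what secures this: by Lemma~\ref{lem:filtration-generated} the controlled layers are functorially generated with no independent higher-defect generators, so the tautological filtration separates levels and no element of strictly smaller defect complexity is forced deeper. A further delicate point is that filtered representations preserve defect-induced relations only on controlled layers; this is harmless for the rigid core, since its very definition places us in the regime where $d(\mathcal P)=k(\mathcal P)$, putting the penultimate layer $\mathcal P^{(k-1)}$ squarely within the controlled depth, where Lemma~\ref{lem:defect-extension-rigid-depth} guarantees that the separation of strata is itself detected functorially. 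Concentrating the verification on this controlled identification is where the real work lies.
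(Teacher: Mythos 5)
There is a genuine gap, and it lies in your central device. Recovering each stratum $\mathcal P^{(i)}$ as the preimage of the $i$-th filtration level under the tautological representation $\id:\mathcal P\to\mathcal P$ does not establish what the lemma asserts. ``Functorially detectable from the category of filtered representations'' means the rigid core must be recoverable from $\Rep_{\fil}(\mathcal P)$ by a construction intrinsic to that category and its filtration structure --- in particular, one preserved by any filtration-preserving equivalence $\Rep_{\fil}(\mathcal P)\simeq\Rep_{\fil}(\mathcal Q)$, since that is exactly how the lemma is applied afterwards (in the corollary that Morita-type equivalence implies strong equivalence, and in Theorem~\ref{thm:boundary-morita}). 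Your tautological object is defined using $\mathcal P$ itself, and you give no intrinsic, equivalence-invariant characterization of it inside $\Rep_{\fil}(\mathcal P)$. This is not a pedantic worry: in the classical Morita situation on which the definition of Morita-type equivalence is modeled, an equivalence of module categories carries the regular module to a progenerator, not to the regular module of the other ring, so ``read the strata off the distinguished self-representation'' is precisely the kind of step that fails under the equivalences the lemma must survive. Your functoriality paragraph does not close this hole either: it only treats comparison functors induced by phase morphisms (which, note, act contravariantly on representation categories, by precomposition), whereas the needed statement concerns arbitrary filtration-preserving equivalences of the representation categories, which need not be induced by any phase morphism.

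The paper's proof avoids the problem by never privileging a particular representation. It identifies rigid elements with defect-degree-zero data, observes that every filtered representation transports defect degree, and then argues that the full subcategory of $\Rep_{\fil}(\mathcal P)$ generated by defect-degree-zero objects is canonically defined, independent of choices, and preserved by any equivalence that respects filtration levels; the rigid core $\mathcal P_{\mathrm{rig}}$ is then read off from that canonical subcategory. To repair your argument you would need to replace the tautological object by an intrinsic characterization of the defect-zero layer of the category --- at which point you would essentially be reproducing the paper's proof rather than giving an alternative to it. Your closing discussion of stratum separation (via Lemma~\ref{lem:filtration-generated} and Lemma~\ref{lem:defect-extension-rigid-depth}) addresses a real but secondary point; the primary obstacle is the equivalence-invariance of the detection procedure, which your construction does not provide.
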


\begin{proof}
By definition of rigidity in Algebraic Phase Theory, an element of $\mathcal P$
is rigid if and only if it has defect degree zero. Equivalently, it lies in the
last filtration layer in which rigidity persists,
$\mathcal P^{(d(\mathcal P)-1)}$, and generates no further defect under phase
interaction.

Let $\rho:\mathcal P\to\mathcal C$ be a filtered representation. By definition, a
filtered representation preserves the canonical filtration:
\[
\rho(\mathcal P^{(i)}) \subseteq \mathcal C^{(i)} \qquad \text{for all } i.
\]
Equivalently, $\rho$ preserves defect degree: if $x\in \mathcal P$ has defect
degree $i$ (meaning $x\in \mathcal P^{(i)}$), then $\rho(x)\in \mathcal C^{(i)}$.

In particular, elements of defect degree zero in $\mathcal P$ are carried to
objects of defect degree zero in $\mathcal C$. Consequently, for every filtered
representation, the images of rigid elements are precisely those landing in the
filtration layer $\mathcal C^{(d(\mathcal P)-1)}$.

Consider now the category $\Rep_{\fil}(\mathcal P)$ of filtered representations
of $\mathcal P$. Within this category, the full subcategory generated by
defect degree zero elements is canonically defined and independent of choices.
Any equivalence of filtered representation categories is required to preserve
filtration levels, and therefore preserves this defect degree zero subcategory.

It follows that the defect degree zero part of the filtered representation
theory, and hence the rigid core $\mathcal P_{\mathrm{rig}}$, can be recovered
functorially from $\Rep_{\fil}(\mathcal P)$. This shows that the rigid core is
detectable purely from filtered representation data.
\end{proof}

\begin{lemma}
\label{lem:no-hidden-extensions}
Let $\mathcal P$ be a finite, \emph{strongly admissible} algebraic phase with
bounded defect.
Then the extension of $\mathcal P$ from its rigid core
$\mathcal P_{\mathrm{rig}}$ to the full phase is uniquely determined by the
canonical filtration and defect data.
\end{lemma}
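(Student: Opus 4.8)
The plan is to deduce Lemma~\ref{lem:no-hidden-extensions} as a direct consequence of the functorial generation established in Lemma~\ref{lem:filtration-generated}, upgraded to the strongly admissible setting. Since $\mathcal P$ is strongly admissible we have $d(\mathcal P)=k(\mathcal P)=:k$, so \emph{every} filtration layer lies within the defect-controlled range. In particular, Lemma~\ref{lem:filtration-generated} applies at each index $1\le i\le k$, and the hypothesis of bounded defect guarantees that this process terminates after finitely many steps with $\mathcal P^{(k)}=0$. The strategy is therefore to reconstruct $\mathcal P$ layer by layer, starting from the rigid core, and to argue that at each stage there is exactly one admissible way to perform the reconstruction.

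The key steps, in order, are as follows. First I would recall that $\mathcal P_{\mathrm{rig}}=\mathcal P^{(k-1)}$ by definition of the rigid core in the strongly admissible regime, so the reconstruction amounts to recovering the layers $\mathcal P^{(k-2)}\supseteq\cdots\supseteq\mathcal P^{(0)}=\mathcal P$ from $\mathcal P^{(k-1)}$. Second, I would invoke Lemma~\ref{lem:filtration-generated} at each index to show that $\mathcal P^{(i-1)}$ is obtained from $\mathcal P^{(i)}$ purely through the canonical defect-extension operations dictated by the interaction law, with no independent generators appearing; this is precisely the statement that the layer introduces no new primitive data. Third, because $i\le d(\mathcal P)$ for every layer, the defect-extension procedure is functorial and presentation-independent, so the passage from $\mathcal P^{(i)}$ to $\mathcal P^{(i-1)}$ admits no residual extension freedom: the interaction law and the defect data fix a unique closure. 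Iterating this upward induction from $i=k-1$ down to $i=1$ reconstructs the entire phase, and uniqueness at each step yields uniqueness of the whole extension.

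The main obstacle I anticipate is making precise the sense in which the reconstruction is \emph{unique} rather than merely \emph{determined}. Lemma~\ref{lem:filtration-generated} tells us $\mathcal P^{(i-1)}$ is the closure of the defect-extension data, but to conclude uniqueness of the extension I must argue that any two strongly admissible phases sharing the rigid core $\mathcal P_{\mathrm{rig}}$, the same canonical filtration, and the same defect data must in fact coincide. The cleanest route is to phrase this as a rigidity statement and reduce it to Theorem~\ref{thm:morphism-rigidity}: given two candidate extensions, the identity on the common rigid core extends to a phase morphism between them, which by strong admissibility and Theorem~\ref{thm:morphism-rigidity} is uniquely determined and is forced to be an isomorphism because the filtration lengths and defect ranks agree. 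I expect the delicate point to be verifying that the shared defect data genuinely supplies such a morphism on the nose, so that the forcing argument of Theorem~\ref{thm:morphism-rigidity} applies layer by layer without any collapse of strata; once that is in place the conclusion is immediate.
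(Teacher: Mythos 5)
Your proposal follows essentially the same route as the paper's own proof: strong admissibility places every filtration layer inside the controlled regime ($d(\mathcal P)=k(\mathcal P)$), Lemma~\ref{lem:filtration-generated} generates each stratum functorially with no independent generators, bounded defect keeps each extension step finite and fixed, and the phase is reconstructed by upward induction starting from the rigid core $\mathcal P^{(k-1)}$. The only divergence is at the end, where the paper simply asserts uniqueness of each step from presentation-independence, whereas you propose formalizing it via Theorem~\ref{thm:morphism-rigidity} applied to two candidate extensions; that refinement (and the existence caveat you correctly flag, since Theorem~\ref{thm:morphism-rigidity} gives only uniqueness, not existence, of the comparison morphism) goes beyond what the paper does but does not change the underlying argument.
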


\begin{proof}
Because $\mathcal P$ is finite and strongly admissible, its defect control depth
coincides with its filtration length:
\[
d(\mathcal P)=k(\mathcal P).
\]
In particular, $\mathcal P$ admits a finite canonical filtration
\[
\mathcal P = \mathcal P^{(0)} \supseteq \mathcal P^{(1)} \supseteq \cdots
\supseteq \mathcal P^{(d(\mathcal P))} = 0,
\]
whose structure is governed by the defect data of $\mathcal P$.

Strong admissibility means that defect control extends through the entire
filtration: for every index $1 \le i \le d(\mathcal P)$, the passage from
$\mathcal P^{(i)}$ to $\mathcal P^{(i-1)}$ is functorial and
presentation-independent.
Equivalently, each filtration layer $\mathcal P^{(i-1)}$ is generated
canonically from $\mathcal P^{(i)}$ by the defect-extension operations dictated
by the interaction law, and no uncontrolled extension freedom appears at any
stage.

More precisely, by Lemma~\ref{lem:filtration-generated}, for each
$1 \le i \le d(\mathcal P)$ the stratum $\mathcal P^{(i-1)}$ is obtained
functorially as the closure of $\mathcal P^{(i)}$ under the canonical
defect-extension operations extracted from the phase structure.
Since $\mathcal P$ is strongly admissible, these operations are
presentation-independent at every depth.

The assumption of bounded defect ensures that at each extension step only
finitely many defect constraints and relations are imposed, so the canonical
construction is finite and fixed.
In particular, no independent extension parameters arise beyond those dictated
by the defect data.

It follows that once $\mathcal P^{(i)}$ is fixed, the structure of
$\mathcal P^{(i-1)}$ is uniquely determined.
Starting from the rigid core
\[
\mathcal P_{\mathrm{rig}} = \mathcal P^{(d(\mathcal P)-1)},
\]
and proceeding inductively upward through the filtration, the entire phase
$\mathcal P$ is uniquely reconstructed from its rigid core together with the
canonical filtration and defect data.
Hence no nontrivial extension freedom remains beyond the rigid core.
\end{proof}

\begin{remark}
For weakly admissible phases with $k(\mathcal P)>d(\mathcal P)$, the conclusion
of Lemma~\ref{lem:no-hidden-extensions} may fail.
Beyond the defect control depth, genuine extension freedom can appear.
\end{remark}

\begin{definition}
Two phases are \emph{Morita-type equivalent} if their categories of filtered
representations are equivalent via filtration-preserving functors.
\end{definition}

\subsection*{Equivalence collapse for strongly admissible phases}

We now turn to the central rigidity results of this section. The key observation
is that for strongly admissible phases with bounded defect, there is no hidden
extension data beyond the rigid core. As a result, any equivalence that preserves
defect stratification on the controlled layers must preserve the entire phase
structure.

We first analyze weak equivalence for weakly admissible phases. We show that it
is completely controlled by the strongly admissible core together with residual
extension data below the defect control depth. We then specialize to the
strongly admissible regime. In this setting the residual freedom disappears and
weak equivalence collapses to strong equivalence.

Finally, we show that Morita type equivalence defined via equivalence of filtered
representation categories also collapses to strong equivalence in this regime.
As a consequence, finite strongly admissible phases admit no genuinely distinct
models with the same filtered representation theory.

\begin{theorem}
\label{thm:equivalence-reduction}
Let $\mathcal P$ and $\mathcal Q$ be weakly admissible algebraic phases.
Then $\mathcal P$ and $\mathcal Q$ are weakly equivalent if and only if:
\begin{enumerate}[label=(\roman*)]
\item their strongly admissible cores
      $\mathcal P^{(d(\mathcal P))}$ and $\mathcal Q^{(d(\mathcal Q))}$
      are strongly equivalent, and
\item their extension data beyond the defect control depth agree up to canonical
      isomorphism.
\end{enumerate}
\end{theorem}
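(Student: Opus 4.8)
The plan is to prove the two implications separately. Write $\mathcal P_0:=\mathcal P^{(d(\mathcal P))}$ and $\mathcal Q_0:=\mathcal Q^{(d(\mathcal Q))}$ for the strongly admissible cores supplied by Proposition~\ref{prop:strong-core}, and let the \emph{extension data} of $\mathcal P$ denote the residual, non-forced structure carried by the filtration beyond depth $d(\mathcal P)$, i.e.\ the layers $\mathcal P^{(i)}$ with $i>d(\mathcal P)$ together with their attachment to the controlled regime. Since weak equivalence by definition retains filtration length and defect stratification up to the control depth, it forces $d(\mathcal P)=d(\mathcal Q)$ and $k(\mathcal P)=k(\mathcal Q)$, so the controlled and uncontrolled regimes of $\mathcal P$ and $\mathcal Q$ are indexed compatibly throughout.

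For the forward direction, suppose $\Phi$ realizes a weak equivalence. Because it retains defect stratification up to the control depth, $\Phi$ respects the filtration and restricts to an isomorphism $\mathcal P_0\to\mathcal Q_0$ of the strongly admissible cores. I would then upgrade this restriction to a strong equivalence: since $\mathcal P_0$ is strongly admissible, Lemma~\ref{lem:no-hidden-extensions} shows its entire structure is forced by its rigid core together with the canonical filtration and defect data, so there is no inessential presentation data for the weak equivalence to discard on $\mathcal P_0$. Hence the restriction preserves interaction law, filtration, termination length, and defect data, which is exactly strong equivalence, yielding~(i). For~(ii), $\Phi$ preserves filtration length and therefore matches the uncontrolled layers index by index; the identification it induces on the extension data is the restriction of the fixed isomorphism $\Phi$ and is thus canonical, giving~(ii).

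For the reverse direction, assume (i) and (ii) and let $\psi:\mathcal P_0\to\mathcal Q_0$ be the given strong equivalence of cores. By Lemma~\ref{lem:filtration-generated}, each controlled layer is obtained functorially and presentation-independently from the core via the canonical defect-extension operations; since $\psi$ preserves the interaction law and commutes with those operations, it extends uniquely to an isomorphism $\Psi$ of the whole controlled part of the filtration, exactly along the lines of the rigidity argument of Theorem~\ref{thm:morphism-rigidity}. I would then invoke~(ii) to glue: the canonical isomorphism of extension data extends $\Psi$ across the uncontrolled layers to a global map $\Phi:\mathcal P\to\mathcal Q$, with inverse assembled symmetrically from $\psi^{-1}$ and the inverse identification of extension data. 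The resulting $\Phi$ is an isomorphism that retains defect stratification up to the control depth (since it agrees with $\Psi$ there) and preserves filtration length (since the uncontrolled layers are matched by~(ii)), hence a weak equivalence.

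I expect the main obstacle to lie in the gluing step of the reverse direction: one must verify that the functorial extension $\Psi$ of $\psi$ through the controlled regime is compatible, at the boundary depth $i=d(\mathcal P)$, with the identification of extension data provided by~(ii), so that the glued map respects the interaction law globally rather than only on each regime separately. Presentation-independence at depth exactly $d(\mathcal P)$ is the precise hypothesis that makes this overlap well-defined, and confirming it amounts to checking that the defect-extension operations linking the controlled core to the first uncontrolled layer are intertwined simultaneously by $\psi$ and by the chosen extension-data isomorphism. A secondary subtlety, on the forward side, is the upgrade from weak to strong equivalence on the core, which relies essentially on Lemma~\ref{lem:no-hidden-extensions} to guarantee that no presentation data on $\mathcal P_0$ is genuinely inessential.
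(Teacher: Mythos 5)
Your proposal is correct and follows essentially the same route as the paper's own proof: in the forward direction you restrict the weak equivalence to the strongly admissible cores and locate all residual freedom in the strata $\mathcal P^{(i)}$ with $i>d(\mathcal P)$, and in the converse you extend the core equivalence functorially through the controlled regime and glue in the identification of extension data, exactly as the paper does (your explicit attention to compatibility at depth $i=d(\mathcal P)$ is in fact more careful than the paper's one-line gluing step). The only caveat is that Lemma~\ref{lem:filtration-generated} and Lemma~\ref{lem:no-hidden-extensions} carry finiteness (and bounded-defect) hypotheses that Theorem~\ref{thm:equivalence-reduction} does not assume; the paper instead appeals directly to the functoriality and presentation-independence built into the definition of the defect control depth, and your argument goes through verbatim with that substitution.
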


\begin{proof}
Suppose first that $\mathcal P$ and $\mathcal Q$ are weakly equivalent.
By definition of weak equivalence, there is an identification that preserves
the structural signature and the defect stratification on all controlled layers,
i.e.\ on $\mathcal P^{(i)}$ and $\mathcal Q^{(i)}$ for every $i\le d(\mathcal P)$.
In particular, weak equivalence forces
\[
d(\mathcal P)=d(\mathcal Q)\qquad\text{and}\qquad k(\mathcal P)=k(\mathcal Q),
\]
so the controlled depth and filtration length agree on both sides.

It follows that the cores
\[
\mathcal P^{(d(\mathcal P))}\qquad\text{and}\qquad \mathcal Q^{(d(\mathcal Q))}
\]
are identified as structured algebraic phases.  Since defect propagation within
these cores is functorial and presentation-independent by definition of the
control depth, this identification preserves the full phase structure on the
core.  Hence the cores are strongly equivalent, proving (i).

Any remaining freedom in a weak equivalence can occur only in the layers
\emph{strictly beyond} the defect control depth, i.e.\ in strata
$\mathcal P^{(i)}$ with $i>d(\mathcal P)$.
These layers encode extension data not forced by defect propagation.
Thus weak equivalence determines this residual extension data only up to
canonical isomorphism, establishing (ii).

\smallskip\noindent
Conversely, suppose (i) and (ii) hold.  Thus the cores
$\mathcal P^{(d(\mathcal P))}$ and $\mathcal Q^{(d(\mathcal Q))}$ are strongly
equivalent, and the residual extension data in strata with $i>d(\mathcal P)$
agree up to canonical isomorphism.

By strong equivalence of the cores, the controlled part of each phase (up to
depth $d(\mathcal P)=d(\mathcal Q)$) matches.  Using the assumed compatibility
of the residual extension data, the deeper strata can also be identified.
Putting these identifications together yields a weak equivalence
$\mathcal P\simeq\mathcal Q$ preserving the structural signature, interaction
law, and defect stratification on all controlled layers.

Hence $\mathcal P$ and $\mathcal Q$ are weakly equivalent if and only if (i)
and (ii) hold.
\end{proof}

\begin{theorem}
\label{thm:weak-equivalence-classification}
Let $\mathcal P$ and $\mathcal Q$ be weakly admissible algebraic phases (of the
same structural signature). Then $\mathcal P$ and $\mathcal Q$ are weakly equivalent if and only if:
\begin{enumerate}[label=(\roman*)]
\item the defect control depths and filtration lengths agree,
\[
d(\mathcal P)=d(\mathcal Q)
\qquad\text{and}\qquad
k(\mathcal P)=k(\mathcal Q),
\]
\item the strongly admissible cores
\[
\mathcal P^{(d(\mathcal P))}\quad\text{and}\quad \mathcal Q^{(d(\mathcal Q))}
\]
are strongly equivalent (as structured phases), and
\item the remaining filtration layers below the control depth are compatible in
the sense that the non-functorial extension data needed to reconstruct
$\mathcal P^{(i-1)}$ from $\mathcal P^{(i)}$ for $i<d(\mathcal P)$ matches that
for $\mathcal Q$ up to canonical isomorphism.
\end{enumerate}
\end{theorem}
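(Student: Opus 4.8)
The plan is to obtain Theorem~\ref{thm:weak-equivalence-classification} as a refinement of the already-proven Theorem~\ref{thm:equivalence-reduction}, by checking that conditions (i)--(iii) here are exactly an unpacking of the two conditions there. Clause (ii) is literally identical to clause (i) of Theorem~\ref{thm:equivalence-reduction} (strong equivalence of the cores $\mathcal P^{(d(\mathcal P))}$ and $\mathcal Q^{(d(\mathcal Q))}$), so the genuine content to establish is that weak equivalence forces the numerical agreement in (i) and that the residual extension data of (iii) corresponds to clause (ii) of Theorem~\ref{thm:equivalence-reduction}.

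For the forward implication I would start from a weak equivalence $\mathcal P\simeq\mathcal Q$ and note that, by definition, it preserves the structural signature, the defect stratification on every controlled layer, and the filtration length. Since $d(\cdot)$ and $k(\cdot)$ are defined solely in terms of this preserved data, both are weak-equivalence invariants; this yields (i), and indeed the equalities $d(\mathcal P)=d(\mathcal Q)$ and $k(\mathcal P)=k(\mathcal Q)$ are already extracted inside the proof of Theorem~\ref{thm:equivalence-reduction}. With the depths aligned, clause (i) of that theorem gives (ii) verbatim, and clause (ii) gives (iii): a weak equivalence can differ only on the strata where defect extension is not forced, so the non-functorial extension data reconstructing one layer from the next is determined only up to canonical isomorphism.

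For the converse I would assume (i)--(iii) and reassemble a weak equivalence. Condition (i) guarantees that the cores and the uncontrolled strata of $\mathcal P$ and $\mathcal Q$ are comparable at all; condition (ii) provides a strong equivalence of cores, which by Lemma~\ref{lem:filtration-generated} and the definition of the control depth forces the entire functorially-determined part of each phase to be regenerated identically on both sides; and condition (iii) supplies the canonical identification of the remaining, non-forced extension data. Splicing these two families of identifications together yields an isomorphism preserving signature, interaction law, and defect stratification on all controlled layers --- precisely a weak equivalence. Equivalently, (ii) and (iii) are nothing but clauses (i) and (ii) of Theorem~\ref{thm:equivalence-reduction}, so its converse direction closes the argument.

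The hard part will be purely definitional rather than logical: to make the splicing in the converse (and the ``up to canonical isomorphism'' in (iii)) well-posed, one must isolate the non-functorial extension data as a genuine invariant. For each uncontrolled step the object to be compared is the collection of extensions of a filtration layer by the next that is \emph{not} already produced by the canonical defect-extension operations of Lemma~\ref{lem:filtration-generated}; this is exactly the extension freedom that Lemma~\ref{lem:no-hidden-extensions} shows to vanish in the strongly admissible case. I would verify that a weak equivalence acts on this data by a canonical isomorphism, and that the identifications chosen at distinct depths are mutually compatible, so that no ambiguity is introduced when they are combined. Once this bookkeeping is settled, the reduction to Theorem~\ref{thm:equivalence-reduction} is immediate and both directions follow.
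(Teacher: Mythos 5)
Your proof follows the same skeleton as the paper's: in the forward direction you extract (i) from the invariance of $d$ and $k$ under weak equivalence and then read off (ii) and (iii) from the behaviour of the controlled and uncontrolled strata; in the converse you splice a strong equivalence of the cores, propagated functorially through the forced layers, with chosen identifications of the non-forced extension data. The one real difference is logistical: the paper proves all of this directly and never cites Theorem~\ref{thm:equivalence-reduction}, whereas your argument leans on the claim that conditions (ii)--(iii) here are literally clauses (i)--(ii) of that theorem, so that its converse ``closes the argument.''

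That claimed identity is the gap. Clause (ii) of Theorem~\ref{thm:equivalence-reduction} governs extension data \emph{beyond} the defect control depth, and the paper's proof of that theorem pins this down explicitly as the strata $\mathcal P^{(i)}$ with $i>d(\mathcal P)$. Condition (iii) of the present theorem instead concerns the reconstruction steps $\mathcal P^{(i)}\rightsquigarrow\mathcal P^{(i-1)}$ with $i<d(\mathcal P)$. These are disjoint index ranges, so the two clauses cannot simply be declared equal. Worse, under the paper's official definition of $d(\mathcal P)$ (defect extension from $\mathcal P^{(i)}$ to $\mathcal P^{(i-1)}$ is functorial for all $1\le i\le d$), the steps with $i<d(\mathcal P)$ carry no non-functorial data at all, so your translation renders (iii) vacuous while leaving the genuinely free strata $i>d(\mathcal P)$ unconstrained; the ``if'' direction of the theorem would then fail, since two phases could satisfy (i)--(iii) yet differ in those free strata. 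The paper's own proof avoids this trap only by arguing from scratch and silently adopting the opposite indexing convention (layers $i\ge d$ forced, freedom at $i<d$), in direct tension with both the definition of $d$ and the proof of Theorem~\ref{thm:equivalence-reduction}. So before your reduction can go through, you must reconcile which side of $d(\mathcal P)$ carries the non-functorial extension data and prove that the residual-data clauses of the two theorems describe the same invariant; as the statements and their proofs stand, they do not, and the reduction you call ``immediate'' is not available.
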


\begin{proof}
We prove both directions.

\medskip
\noindent ($\Rightarrow$). Assume $\mathcal P$ and $\mathcal Q$ are weakly equivalent. Unpacking the
definition, this means there exists an identification of $\mathcal P$ and
$\mathcal Q$ after forgetting inessential presentation data, but \emph{retaining}
the structural signature and the canonical defect stratification up to the defect
control depth, together with the associated filtration bookkeeping.

\smallskip
\noindent\emph{(i) Equality of $d$ and $k$.}
Since weak equivalence is required to preserve the canonical filtration
\emph{as a stratified object up to the controlled regime} (and to preserve
filtration length as part of the admissible datum), it sends filtration strata
to corresponding strata. In particular, the index at which defect propagation
ceases to be functorially controlled is an invariant of the weak equivalence
class. Hence the defect control depths agree:
\[
d(\mathcal P)=d(\mathcal Q).
\]
Likewise, weak equivalence preserves termination/filtration length data, so
\[
k(\mathcal P)=k(\mathcal Q).
\]
This establishes (i).

\smallskip
\noindent\emph{(ii) Strong equivalence of the strongly admissible cores.}
By Proposition~\ref{prop:strong-core}, the stratum $\mathcal P^{(d(\mathcal P))}$
is the \emph{strongly admissible core} of $\mathcal P$, and similarly for
$\mathcal Q$. The definition of weak equivalence includes preservation of defect
stratification and defect-induced relations on all controlled layers; in
particular, it identifies the controlled part of the filtration \emph{functorially}.
But on $\mathcal P^{(d(\mathcal P))}$ and $\mathcal Q^{(d(\mathcal Q))}$ there is
\emph{no residual extension freedom} by definition of defect control depth and
by the characterization of the strong core: all structure in these cores is
forced by the interaction law and canonical defect propagation.

Consequently, the weak equivalence induces an isomorphism of structured phase
objects
\[
\mathcal P^{(d(\mathcal P))}\;\cong\;\mathcal Q^{(d(\mathcal Q))}
\]
preserving the structural signature, interaction law, canonical filtration
restricted to the controlled regime, and controlled defect data. That is exactly
strong equivalence of the cores. This proves (ii).

\smallskip
\noindent\emph{(iii) Compatibility of the non-functorial extension data below depth $d$.}
By definition of defect control depth, for indices $i<d(\mathcal P)$ the passage
from $\mathcal P^{(i)}$ to $\mathcal P^{(i-1)}$ may involve \emph{residual
non-functorial choices}: there can be multiple inequivalent ways of realizing the
lower stratum consistent with the same higher stratum and the same controlled
defect data. These choices are precisely what we are calling the
\emph{non-functorial extension data}.

A weak equivalence is allowed to forget inessential presentation data, but it is
\emph{not} allowed to change the weak equivalence class of these extension
choices: it must match the remaining layers in a way compatible with the phase
structure (up to the permitted notion of canonicity). Therefore the extension
data occurring in $\mathcal P$ and $\mathcal Q$ below the control depth must
match up to canonical isomorphism. This is exactly condition (iii).

\medskip
\medskip
\noindent($\Leftarrow$).
Now assume conditions (i)–(iii). We explain how to assemble a weak equivalence
$\mathcal P \sim \mathcal Q$.

The strategy is as follows. We first identify the strongly admissible cores of
$\mathcal P$ and $\mathcal Q$, which agree by hypothesis. Because defect
propagation is functorial above the control depth, this identification uniquely
determines all filtration layers in the controlled regime. Below the control
depth, functoriality fails in general, so additional extension data must be
specified. Here we use the hypothesis that these weak extension profiles agree
up to canonical isomorphism. Combining these two ingredients allows us to match
the entire canonical filtrations of $\mathcal P$ and $\mathcal Q$ in a way that
preserves the phase structure up to weak equivalence.

\smallskip
By condition (ii), there is a strong equivalence
\[
\phi_d:\mathcal P^{(d(\mathcal P))}\;\xrightarrow{\ \cong\ }\;
\mathcal Q^{(d(\mathcal Q))}.
\]
By condition (i), the defect control depths and filtration lengths agree, so we
write
\[
d:=d(\mathcal P)=d(\mathcal Q),
\qquad
k:=k(\mathcal P)=k(\mathcal Q).
\]

For strata at and beyond the defect control depth, that is, for all $i\ge d$,
defect propagation is functorial and presentation independent. Equivalently,
once the strongly admissible core $\mathcal P^{(d)}$ is fixed, every higher
filtration stratum is uniquely determined by iterating the canonical
defect extension and closure operations prescribed by the structural signature.
Applying the same functorial constructions on both sides, the identification
$\phi_d$ therefore extends canonically to isomorphisms
\[
\phi_i:\mathcal P^{(i)}\;\xrightarrow{\ \cong\ }\;\mathcal Q^{(i)}
\qquad\text{for all } i\ge d.
\]
No choices are involved in this extension, precisely because we are working
entirely within the defect controlled regime. For indices $i<d$, the situation is different. The passage from
$\mathcal P^{(i)}$ to $\mathcal P^{(i-1)}$ is not, in general, functorially
forced. Additional non canonical extension data may be required. Condition (iii)
asserts that the extension data needed to reconstruct $\mathcal P^{(i-1)}$ from
$\mathcal P^{(i)}$ matches the corresponding extension data for $\mathcal Q$ up
to canonical isomorphism. Using this compatibility, we choose identifications
\[
\phi_i:\mathcal P^{(i)}\;\xrightarrow{\ \cong\ }\;\mathcal Q^{(i)}
\qquad\text{for each } i<d,
\]
so that each extension step
$\mathcal P^{(i)}\rightsquigarrow\mathcal P^{(i-1)}$ corresponds to
$\mathcal Q^{(i)}\rightsquigarrow\mathcal Q^{(i-1)}$ in a manner compatible with
the interaction law, filtration structure, and defect data, up to the permitted
canonical isomorphisms.

In this way we obtain compatible identifications $\phi_i$ for all filtration
levels $i=0,1,\dots,k$, such that the structural signature, interaction law,
canonical filtration, and controlled defect stratification are respected, and
the remaining non functorial extension choices below the control depth are
matched as allowed by weak equivalence. This is exactly the data required to
identify $\mathcal P$ and $\mathcal Q$ after forgetting inessential presentation
choices while retaining the admissible phase structure. Hence $\mathcal P$ and $\mathcal Q$ are weakly equivalent.

\end{proof}

\begin{corollary}
For finite, strongly admissible phases with bounded defect,
Morita-type equivalence implies strong equivalence.
\end{corollary}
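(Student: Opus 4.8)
The plan is to leverage the two structural facts already established: that the rigid core is functorially detectable from filtered representation data (Lemma~\ref{lem:rigid-core-detected}), and that a finite strongly admissible phase with bounded defect is uniquely reconstructed from its rigid core together with the canonical filtration and defect data (Lemma~\ref{lem:no-hidden-extensions}). Given a Morita-type equivalence between $\mathcal P$ and $\mathcal Q$, I would begin with a filtration-preserving equivalence $\Phi:\Rep_{\fil}(\mathcal P)\xrightarrow{\ \simeq\ }\Rep_{\fil}(\mathcal Q)$. Because $\Phi$ preserves filtration levels, and because the rigid core is carved out of the filtered representation category precisely as its defect-degree-zero part, $\Phi$ must carry the detected rigid core of $\mathcal P$ onto that of $\mathcal Q$; equivalently, $\Phi$ restricts to an equivalence between the filtered representation theories of $\mathcal P_{\mathrm{rig}}$ and $\mathcal Q_{\mathrm{rig}}$.

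The second step is to upgrade this equivalence of rigid cores from a representation-categorical equivalence to a strong equivalence of structured objects. Here one uses that the rigid core consists exactly of the defect-degree-zero elements, so its filtered representation category carries no nontrivial defect stratification, and the rigidity axioms force the core to be recoverable up to canonical isomorphism from this category. Thus the detected identification of rigid cores is promoted to a strong equivalence $\mathcal P_{\mathrm{rig}}\cong\mathcal Q_{\mathrm{rig}}$ preserving structural signature, interaction law, and the (trivial) defect data on the core.

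Finally, I would propagate this strong equivalence of cores upward through the filtration. Since both $\mathcal P$ and $\mathcal Q$ are finite, strongly admissible, and of bounded defect, Lemma~\ref{lem:no-hidden-extensions} guarantees that each is uniquely reconstructed from its rigid core by the canonical defect-extension operations, with no residual extension freedom. The functor $\Phi$, being filtration-preserving, matches the canonical filtration and controlled defect data of $\mathcal P$ with those of $\mathcal Q$ at every level. Applying the same functorial defect-extension operations on both sides, the isomorphism of rigid cores therefore extends uniquely and compatibly to an isomorphism of the entire filtered phase objects, yielding a strong equivalence $\mathcal P\cong\mathcal Q$.

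The main obstacle I anticipate is the second step: establishing that a representation-categorical equivalence of the rigid cores already forces an isomorphism of structured objects, rather than merely a Morita equivalence in the classical sense, where a ring and its matrix algebras are Morita equivalent yet not isomorphic. The collapse must be made to rest squarely on the rigidity axioms and the defect-degree-zero characterization of the core, so that the filtered representation category of a rigid phase remembers the phase on the nose. I would isolate this as the essential use of strong admissibility, with Lemmas~\ref{lem:rigid-core-detected} and~\ref{lem:no-hidden-extensions} handling detection and reconstruction around it.
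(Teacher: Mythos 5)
Your proposal follows essentially the same route as the paper's own proof: use Lemma~\ref{lem:rigid-core-detected} to identify the rigid cores from the filtration-preserving equivalence of $\Rep_{\fil}(\mathcal P)$ and $\Rep_{\fil}(\mathcal Q)$, then use Lemma~\ref{lem:no-hidden-extensions} to reconstruct each phase uniquely from its rigid core and conclude $\mathcal P\cong\mathcal Q$. The obstacle you flag in your second step --- promoting a representation-categorical identification of cores to an isomorphism of structured objects --- is exactly the content the paper assigns to Lemma~\ref{lem:rigid-core-detected} (which asserts recovery of the core \emph{as a structured phase object}), so your argument and the paper's coincide in both decomposition and in where the essential weight rests.
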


\begin{proof}
Let $\mathcal P$ and $\mathcal Q$ be finite, strongly admissible algebraic phases
with bounded defect, and suppose that they are Morita-type equivalent. By
definition, this means that their categories of filtered representations are
equivalent via a filtration-preserving equivalence
\[
\Rep_{\fil}(\mathcal P)\;\simeq\;\Rep_{\fil}(\mathcal Q).
\]

The argument proceeds as follows. First, we observe that a filtration-preserving
equivalence of filtered representation categories necessarily preserves defect
degree, and therefore identifies the rigid cores of the two phases. Second, we
use strong admissibility and bounded defect to show that once the rigid core is
fixed, there is a unique way to reconstruct the full phase. Combining these two
observations shows that Morita-type equivalence forces an isomorphism of the
underlying phase objects.

Because the equivalence of representation categories preserves the filtration, it
carries objects and morphisms in the $i$th filtration level on the
$\mathcal P$-side to objects and morphisms in the $i$th filtration level on the
$\mathcal Q$-side. In particular, it preserves defect degree. Rigid elements are
exactly those of defect degree zero, so the equivalence identifies the defect-zero
part of the filtered representation theory on both sides. By
Lemma~\ref{lem:rigid-core-detected}, this defect-zero theory determines the rigid
core functorially as a structured phase object. Hence the Morita-type equivalence
induces an isomorphism
\[
\mathcal P_{\mathrm{rig}}\;\cong\;\mathcal Q_{\mathrm{rig}}
\]
of rigid phase objects. Since $\mathcal P$ and $\mathcal Q$ are strongly admissible, the rigid core
coincides with the entire defect-controlled structure of each phase.
By Lemma~\ref{lem:no-hidden-extensions}, there is no nontrivial extension data
beyond the rigid core: once $\mathcal P_{\mathrm{rig}}$ is fixed, the full phase
$\mathcal P$ is uniquely determined, and similarly for $\mathcal Q$. Because the Morita-type equivalence identifies the rigid cores
\[
\mathcal P_{\mathrm{rig}}\;\cong\;\mathcal Q_{\mathrm{rig}},
\]
and there are no further weak layers to reconstruct, this identification extends
uniquely to an isomorphism of phases
\[
\mathcal P \cong \mathcal Q.
\]
In particular, the structural signature, interaction law, canonical filtration,
and defect data all coincide. This is exactly strong equivalence.

\end{proof}

\begin{remark}
Filtered representation theory behaves very differently in the strongly and
weakly admissible regimes.

For finite, strongly admissible algebraic phases, filtered representation
theory is a complete invariant: no two non-isomorphic phases share the same
filtered representation category.

For weakly admissible phases, this rigidity holds only within the
defect-controlled regime. Beyond the defect control depth, Morita-type
equivalence may preserve filtered representation theory while allowing
non-isomorphic phase extensions. These residual extension parameters are
intrinsic and will be analyzed in later work.
\end{remark}

\section{Intrinsic Invariants of Phases}

A central feature of Algebraic Phase Theory is that much of the structure of a
phase is forced rather than chosen.
In particular, numerical and stratified data extracted from the canonical
filtration and rigidity structure are intrinsic and do not depend on
presentation.

The extent to which these invariants are fully determined depends on the
strength of the admissible input data.
For strongly admissible phases, all invariants listed below are rigid and
complete.
For weakly admissible phases, the same invariants remain well-defined on the
controlled defect regime, but may admit additional freedom beyond the defect
control depth.

This section records the intrinsic invariants that are forced by phase
interaction and explains their functorial behaviour under equivalence. 
The following invariance statement is immediate from the definitions, 
but we record it explicitly since these quantities will serve as basic 
invariants throughout the remainder of the series.

\medskip

\begin{theorem}
\label{thm:intrinsic-invariants}
Let $\mathcal P$ and $\mathcal Q$ be \emph{strongly admissible} algebraic phases
that are equivalent as structured phase objects.
Then $\mathcal P$ and $\mathcal Q$ have the same termination length, defect
rank, boundary depth, and signature complexity.
\end{theorem}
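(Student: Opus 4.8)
The plan is to fix an isomorphism $\Phi:\mathcal P\to\mathcal Q$ of structured phase objects witnessing the strong equivalence (the phrase ``equivalent as structured phase objects'' is exactly strong equivalence in the sense of the definition above), and then to verify each of the four invariants in turn, observing that every one of them is a quantity extracted functorially from data that $\Phi$ preserves by definition. Recall that strong equivalence requires $\Phi$ to preserve the structural signature, the interaction law, the canonical filtration, the termination length, and the defect data. In particular, since $\Phi$ is invertible and filtration-preserving in both directions, it restricts to isomorphisms $\Phi:\mathcal P^{(i)}\xrightarrow{\cong}\mathcal Q^{(i)}$ for every $i$; thus it matches the filtration strata exactly, carrying nontrivial layers to nontrivial layers and trivial layers to trivial ones.

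First, for the termination length, I would note that $k(\mathcal P)$ is by definition the largest index with $\mathcal P^{(k)}$ nontrivial, so the stratum-by-stratum matching furnished by $\Phi$ forces $k(\mathcal P)=k(\mathcal Q)$ (this is in any case listed directly among the data preserved by strong equivalence). For the defect rank, I would invoke that it is a numerical invariant read off functorially from the defect data, encoded by a defect complexity function $\delta$ as in Lemma~\ref{lem:filtration-subphase}; since $\Phi$ preserves the defect data it intertwines the two complexity functions, $\delta_{\mathcal Q}\circ\Phi=\delta_{\mathcal P}$, and hence equates any quantity computed from them, in particular the defect rank.

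For the boundary depth, I would use that the rigid core $\mathcal P_{\mathrm{rig}}=\mathcal P^{(k-1)}$ is the penultimate stratum; since $\Phi$ matches strata and $k(\mathcal P)=k(\mathcal Q)$, it carries $\mathcal P^{(k-1)}$ isomorphically onto $\mathcal Q^{(k-1)}$, and therefore descends to an isomorphism of boundaries $\partial\mathcal P\cong\partial\mathcal Q$. Because the canonical filtration descends functorially to the boundary, this descended map remains filtration-compatible, so it matches the induced boundary filtrations stratum by stratum and equates the boundary depths. Finally, signature complexity is by construction an invariant of the structural signature alone, which $\Phi$ preserves, so it too agrees on the two sides.

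As the remark preceding the statement indicates, there is no genuine analytic or combinatorial obstacle here; the only content is the bookkeeping that makes ``immediate from the definitions'' precise, namely confirming for each invariant that it is a functorial read-out of signature, filtration, and defect data rather than of some auxiliary presentation. The single step carrying any subtlety is the boundary depth, where one must check that the descended map $\partial\mathcal P\to\partial\mathcal Q$ is still filtration-preserving; this is exactly the functorial descent of the filtration to the boundary recorded after the definition of $\partial\mathcal P$, and so presents no real difficulty.
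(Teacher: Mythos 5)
Your proposal is correct and follows essentially the same route as the paper's proof: both treat the equivalence as preserving signature, filtration, and defect data by definition, and then verify each of the four invariants as a functorial read-out of that preserved data (termination length from the filtration, defect rank from defect data, boundary depth via preservation of the rigid core and descent to the quotient $\partial\mathcal P$, signature complexity from the signature). The only cosmetic difference is that you compute defect rank by intertwining the defect complexity functions $\delta$, whereas the paper reads it off the subquotients $\mathcal P^{(i)}/\mathcal P^{(i+1)}$; both are the same bookkeeping.
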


\begin{proof}
An equivalence of algebraic phases preserves the structural signature and the
canonical filtration by definition.
Since the termination length $L(\mathcal P)$ is the length of the canonical
filtration
\[
\mathcal P = \mathcal P^{(0)} \supseteq \cdots \supseteq \mathcal P^{(k(\mathcal P))}=0,
\]
it is invariant under phase equivalence.

The defect rank is determined by the maximal rank (or dimension) of the
non-rigid filtration layers $\mathcal P^{(i)}/\mathcal P^{(i+1)}$, and is
therefore preserved as well. Equivalence preserves the rigid core
$\mathcal P_{\mathrm{rig}}=\mathcal P^{(k(\mathcal P)-1)}$, and hence induces a canonical
equivalence of quotients
\[
\partial\mathcal P \;\cong\; \mathcal P/\mathcal P_{\mathrm{rig}},
\]
showing that boundary depth is invariant. Finally, signature complexity depends only on the structural signature of the
phase, which is part of the preserved data under equivalence.
Thus all listed invariants agree for $\mathcal P$ and $\mathcal Q$.
\end{proof}

\begin{remark}
For weakly admissible phases, the same invariants are canonically defined on the
controlled defect subphase $\mathcal P^{(d(\mathcal P))}$.
Beyond the defect control depth, termination length and boundary structure may
admit additional extension freedom and are therefore not expected to be rigid
invariants.
\end{remark}

\medskip

\begin{theorem}
\label{thm:boundary-morita}
Let $\mathcal P$ and $\mathcal Q$ be \emph{strongly admissible} algebraic phases.
If $\mathcal P$ and $\mathcal Q$ are Morita-type equivalent, then their
boundaries are canonically equivalent:
\[
\partial\mathcal P \;\simeq\; \partial\mathcal Q.
\]
\end{theorem}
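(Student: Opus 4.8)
The plan is to show $\partial\mathcal P \simeq \partial\mathcal Q$ by factoring through the rigid cores. The boundary is defined as the quotient $\partial\mathcal P = \mathcal P/\mathcal P_{\mathrm{rig}}$, so I first need a handle on both $\mathcal P$ and $\mathcal P_{\mathrm{rig}}$ as structured objects recoverable from filtered representation data. The key prior inputs are Lemma~\ref{lem:rigid-core-detected}, which extracts the rigid core functorially from $\Rep_{\fil}$, and Lemma~\ref{lem:no-hidden-extensions}, which says that for finite strongly admissible phases with bounded defect the full phase is uniquely reconstructed from its rigid core. In fact the preceding Corollary already assembles these into the statement that Morita-type equivalence implies strong equivalence in this regime, so the cleanest route is to invoke it directly.

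First I would unpack the hypothesis: a Morita-type equivalence gives a filtration-preserving equivalence $\Rep_{\fil}(\mathcal P)\simeq\Rep_{\fil}(\mathcal Q)$. By the Corollary (equivalence collapse for finite strongly admissible phases with bounded defect), this forces a strong equivalence $\mathcal P\cong\mathcal Q$ of structured phase objects, preserving the structural signature, interaction law, canonical filtration, and defect data. Second, I would observe that a strong equivalence carries the rigid core to the rigid core: since $\mathcal P_{\mathrm{rig}}=\mathcal P^{(k-1)}$ is a distinguished filtration stratum and the equivalence preserves the whole filtration together with its length, the isomorphism $\mathcal P\cong\mathcal Q$ restricts to an isomorphism $\mathcal P_{\mathrm{rig}}\cong\mathcal Q_{\mathrm{rig}}$. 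This is already implicit in the proof of Theorem~\ref{thm:intrinsic-invariants}, where boundary depth is shown to be invariant via the preserved rigid core.

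Third, I would pass to the quotient. A structure-preserving isomorphism that respects the filtration and sends $\mathcal P_{\mathrm{rig}}$ isomorphically onto $\mathcal Q_{\mathrm{rig}}$ descends to an isomorphism of quotients
\[
\partial\mathcal P = \mathcal P/\mathcal P_{\mathrm{rig}}
\;\xrightarrow{\ \cong\ }\;
\mathcal Q/\mathcal Q_{\mathrm{rig}} = \partial\mathcal Q.
\]
Because the quotient construction is functorial in the pair $(\mathcal P,\mathcal P_{\mathrm{rig}})$ and the equivalence was canonical (the rigid core is detected functorially, not chosen), the induced boundary equivalence is likewise canonical, which is the assertion of the theorem. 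The filtration on $\partial\mathcal P$ described in the earlier remark is carried along by the same descent, so the equivalence respects the residual defect stratification on the boundary as well.

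The main obstacle I anticipate is justifying the \emph{canonicity} of the descent rather than the mere existence of some boundary isomorphism. Existence follows formally once $\mathcal P_{\mathrm{rig}}$ is carried isomorphically onto $\mathcal Q_{\mathrm{rig}}$, but canonicity requires that the identification of rigid cores be the functorially forced one coming from Lemma~\ref{lem:rigid-core-detected}, and that the reconstruction of the full phase from its core (Lemma~\ref{lem:no-hidden-extensions}) be compatible with that identification. The delicate point is checking that these two functorial constructions---detection of the core and reconstruction of the phase---commute with the quotient, so that no auxiliary choice enters at the boundary. I would handle this by noting that the whole chain of identifications is produced by functors applied uniformly on both sides, so that uniqueness at each stage (detection, reconstruction, restriction to the core, passage to the quotient) propagates to a unique, hence canonical, boundary equivalence.
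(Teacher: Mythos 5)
Your argument routes through the equivalence-collapse corollary (the unlabeled corollary following Theorem~\ref{thm:weak-equivalence-classification}: ``for finite, strongly admissible phases with bounded defect, Morita-type equivalence implies strong equivalence''), and this is where the gap lies: Theorem~\ref{thm:boundary-morita} assumes only that $\mathcal P$ and $\mathcal Q$ are strongly admissible; it assumes neither finiteness nor bounded defect. Both of your key inputs---that corollary, and Lemma~\ref{lem:no-hidden-extensions}, which underwrites reconstructing the full phase from its rigid core---carry exactly those extra hypotheses. So your proof establishes boundary invariance only for finite strongly admissible phases with bounded defect, not for the class of phases the theorem actually covers. Under the stated hypotheses, the intermediate statement you lean on ($\mathcal P\cong\mathcal Q$ as structured phase objects) is simply not available; and if it were available in this generality, the theorem would be a triviality, whereas its content is precisely that boundary data is invariant even without collapsing Morita-type equivalence to a full isomorphism of phases.

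The paper's own proof avoids this by never reconstructing the phases. It argues: the Morita-type equivalence preserves filtration levels, hence defect degree; by Lemma~\ref{lem:rigid-core-detected} the defect-zero part of $\Rep_{\fil}$ detects the rigid core functorially, so the equivalence identifies $\mathcal P_{\mathrm{rig}}$ with $\mathcal Q_{\mathrm{rig}}$ canonically; and since $\partial\mathcal P=\mathcal P/\mathcal P_{\mathrm{rig}}$ and quotient formation is functorial with respect to rigid-core-preserving morphisms, this identification already induces $\partial\mathcal P\simeq\partial\mathcal Q$. That chain uses neither bounded defect nor Lemma~\ref{lem:no-hidden-extensions}. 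To repair your write-up, either add the missing hypotheses to the theorem (weakening it), or restructure as the paper does: stop at the level of rigid cores and invoke functoriality of the boundary quotient instead of full phase reconstruction. (A smaller caveat: Lemma~\ref{lem:rigid-core-detected} is itself stated for finite phases, so even the paper's proof implicitly uses finiteness; but your route needs strictly more, and the dependence on bounded defect is entirely avoidable.)
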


\begin{proof}
A Morita-type equivalence between $\mathcal P$ and $\mathcal Q$ induces an
equivalence
\[
\Rep_{\fil}(\mathcal P)\;\simeq\;\Rep_{\fil}(\mathcal Q)
\]
of their categories of filtered representations.
By definition, such an equivalence preserves filtration levels and hence
preserves defect degree.

Rigid elements are precisely those of defect degree zero.
By Lemma~\ref{lem:rigid-core-detected}, the defect-zero part of the filtered
representation theory detects the rigid core functorially.
It follows that the induced equivalence identifies the rigid cores
$\mathcal P_{\mathrm{rig}}$ and $\mathcal Q_{\mathrm{rig}}$ up to canonical
equivalence.

The boundary of a phase is defined as the quotient by its rigid core,
$\partial\mathcal P = \mathcal P/\mathcal P_{\mathrm{rig}}$.
Since formation of this quotient is functorial with respect to rigid-core
preserving morphisms, the equivalence of rigid cores induces a canonical
equivalence
\[
\partial\mathcal P \;\simeq\; \partial\mathcal Q.
\]
\end{proof}

\begin{theorem}
\label{thm:boundary-localization}
Let $\mathcal P$ be a weakly admissible algebraic phase, and write
$d:=d(\mathcal P)$.
Let $F:\mathcal P\to \mathcal R$ be any phase morphism (or any functorial
invariant) such that the restriction of $F$ to the strongly admissible core is
trivial in the relevant sense (equivalently, $F$ does not distinguish elements
of $\mathcal P^{(d)}$).
Then $F$ factors (uniquely) through the quotient map
\[
\pi:\mathcal P \longrightarrow \mathcal P/\mathcal P^{(d)}.
\]
In particular, any such $F$ vanishes on $\mathcal P^{(d)}$.
\end{theorem}

\begin{proof}
Let $\mathcal P$ be a weakly admissible algebraic phase and write
$d := d(\mathcal P)$.
By definition of the defect control depth, the subphase
$\mathcal P^{(d)}$ consists exactly of those filtration layers whose defect
behaviour is functorially forced by the interaction law and the defect axioms.
In particular, $\mathcal P^{(d)}$ contains no non-functorial extension data.

Let $F:\mathcal P \to \mathcal R$ be a phase morphism (or functorial invariant)
whose restriction to $\mathcal P^{(d)}$ is trivial in the relevant sense.
Equivalently, $F$ does not distinguish between elements of $\mathcal P^{(d)}$. Since $F$ identifies all elements of $\mathcal P^{(d)}$, it is constant on that
subphase. By assumption, $F$ identifies all elements of $\mathcal P^{(d)}$, so it is constant on that subphase and therefore descends to the quotient that collapses $\mathcal P^{(d)}$. By the universal property of the quotient, there therefore exists a
unique morphism
\[
\widetilde F:\mathcal P/\mathcal P^{(d)} \longrightarrow \mathcal R
\]
such that
\[
F = \widetilde F \circ \pi,
\]
where $\pi:\mathcal P \to \mathcal P/\mathcal P^{(d)}$ denotes the canonical
quotient map. This establishes the claimed factorization property. In particular, any such
$F$ vanishes on the strongly admissible core $\mathcal P^{(d)}$.
\end{proof}

\begin{remark}
For weakly admissible phases, Morita-type equivalence still induces equivalence
of boundaries \emph{up to the controlled defect depth}.
Beyond this regime, boundary strata may contain genuine extension parameters
and need not be invariant under Morita-type equivalence.
\end{remark}

\section{The Category and 2-Category of Phases}

Algebraic Phase Theory is not only concerned with individual phases, but also
with the maps between them and the ways in which such maps can vary.
Because phases carry intrinsic filtration and defect data, morphisms are highly
constrained: any meaningful notion of morphism must respect the canonical
filtration and defect stratification. The categorical behaviour of phases depends crucially on the strength of the 
admissible input.
For strongly admissible phases, defect propagation is fully controlled and no
independent extension choices remain.
In this regime, both morphisms and higher morphisms are rigidly determined.

For weakly admissible phases, by contrast, rigidity holds only up to the defect
control depth.
Below this depth, genuine extension freedom may appear, and categorical
structures must allow controlled flexibility. In this section we make these distinctions explicit.
We first define the category formed by strongly admissible phases and explain
why this restriction is necessary for categorical rigidity.
We then introduce a $2$-categorical refinement in which $2$-morphisms encode
the residual flexibility that cannot be captured at the level of phase
morphisms alone.

\subsection*{The category of phases}

We begin by isolating the categorical structure associated to
\emph{strongly admissible} algebraic phases.
In this regime, defect control extends through the entire filtration, and the
phase is fully determined by its rigid core together with the canonical defect
propagation rules. Restricting to strongly admissible phases ensures that phase morphisms are
themselves rigid objects: once their behaviour on the rigid core is fixed, no
additional extension choices are available.
This rigidity is essential for forming a well-behaved category with strictly
determined composition and identities.

Weakly admissible phases, by contrast, may carry uncontrolled extension data in
lower filtration layers.
Allowing such phases as objects would introduce morphisms whose behaviour is not
forced by defect data alone.
For this reason, the basic category of phases is defined using strongly
admissible objects.

\begin{definition}
Let $\mathsf{C}$ denote the category whose objects are \emph{strongly admissible}
algebraic phases satisfying the axioms of Algebraic Phase Theory, and whose
morphisms are \emph{phase morphisms}, that is, structure-preserving maps which
preserve the structural signature, interaction law, canonical filtration,
termination length, and defect data.
Composition and identities are given by ordinary composition and identity maps.
\end{definition}

\begin{proposition}
\label{prop:non-full-inclusion}
The inclusion functor
\[
\iota:\mathsf{APT}_{\mathrm{strong}}\hookrightarrow
\mathsf{APT}_{\mathrm{weak}}
\]
from strongly admissible phases to weakly admissible phases is faithful but not
full.
\end{proposition}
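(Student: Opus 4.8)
The plan is to establish the two assertions separately. Faithfulness is the routine half: the inclusion functor $\iota$ acts as the identity on objects and on morphisms, merely reinterpreting a strongly admissible phase as a weakly admissible one (recall that strong admissibility is the special case $k(\mathcal P)=d(\mathcal P)$ of weak admissibility). Since the underlying set of phase morphisms $\Hom_{\APT_{\mathrm{strong}}}(\mathcal P,\mathcal Q)$ is literally a subset of $\Hom_{\APT_{\mathrm{weak}}}(\mathcal P,\mathcal Q)$ and $\iota$ does not alter the morphisms, the map on hom-sets is injective. Thus faithfulness is immediate from the fact that $\iota$ is an inclusion that leaves morphisms unchanged.

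The substance lies in establishing that $\iota$ is \emph{not} full. Here I would exhibit a single pair of strongly admissible phases $\mathcal P,\mathcal Q$ together with a morphism in $\APT_{\mathrm{weak}}(\iota\mathcal P,\iota\mathcal Q)$ that does not arise from any strongly admissible phase morphism $\mathcal P\to\mathcal Q$. The key conceptual point is that the notion of ``phase morphism'' in the weakly admissible category permits genuine extension freedom on filtration layers strictly beyond the defect control depth, whereas for a strongly admissible object $k=d$, so no such layers exist \emph{for the object}, but the ambient category $\APT_{\mathrm{weak}}$ measures morphisms by the weaker admissibility conditions. More precisely, the morphism axioms in the weakly admissible setting only require functorial preservation of defect data on controlled layers $i\le d$, so a weakly admissible morphism between the same two objects may fail to preserve the additional rigidity (preservation of \emph{full} defect data and termination length in the strict structured sense) demanded of strongly admissible morphisms.

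The concrete construction I would use is to take $\mathcal Q$ strongly admissible and then exhibit a weakly admissible morphism $\mathcal P\to\mathcal Q$ whose image violates, say, the equal-defect-rank matching or the strict filtration-length preservation (M3) that is enforced in $\mathsf{C}$ but relaxed in $\APT_{\mathrm{weak}}$. By Theorem~\ref{thm:morphism-rigidity}, every strongly admissible morphism between finite strongly admissible phases of equal defect rank is uniquely determined by its restriction to the rigid core; a weakly admissible morphism carries no such uniqueness constraint, so one can perturb the behaviour on a noncontrolled layer to obtain a genuinely new weakly admissible morphism with no strongly admissible preimage. Displaying one such pair suffices to defeat fullness.

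The main obstacle I anticipate is producing an \emph{honest} counterexample rather than a definitional tautology: since both source and target are strongly admissible, I must locate the extra freedom in the \emph{morphism} conditions, not in the objects. The cleanest route is to compare the defining morphism axioms of the two categories directly---the strongly admissible morphisms of $\mathsf{C}$ preserve termination length and full defect data (conditions effectively forcing equal defect rank and strict filtration matching), while the weakly admissible category only requires functoriality up to $d(\mathcal P)$---and then to name a minimal phase, for instance a two-layer phase with a collapsible boundary stratum, on which a defect-rank-decreasing or length-shortening weakly admissible morphism exists that cannot be a morphism in $\mathsf{C}$. Verifying that the exhibited map satisfies the weaker axioms while violating the stronger ones is the crux, and I would carry it out by checking the numbered conditions (M1)--(M4) against their strongly admissible strengthenings one at a time.
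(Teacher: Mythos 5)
Your faithfulness argument is correct and is more elementary than the paper's: the paper deduces injectivity on hom-sets by restricting to rigid cores and invoking morphism rigidity (Theorem~\ref{thm:morphism-rigidity}), whereas you observe that an inclusion functor acting as the identity on morphisms is tautologically injective on hom-sets. Both work; yours needs no rigidity hypothesis at all.

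On non-fullness your route genuinely diverges from the paper's, and it is aimed at the correct target. Fullness quantifies only over hom-sets $\Hom_{\mathsf{APT}_{\mathrm{weak}}}(\iota\mathcal P,\iota\mathcal Q)$ with \emph{both} $\mathcal P$ and $\mathcal Q$ strongly admissible. The paper's proof, after stating this correctly, justifies the failure of fullness by invoking a weakly admissible phase $\mathcal R$ with $k(\mathcal R)>d(\mathcal R)$ whose uncontrolled layers support extra morphisms; but such an $\mathcal R$ is not of the form $\iota(\mathcal P)$, so those morphisms live in hom-sets that fullness never mentions. You explicitly identify and avoid exactly this trap (``I must locate the extra freedom in the morphism conditions, not in the objects'') and instead propose to exhibit a weak-category morphism between two strongly admissible objects that satisfies the weak axioms while violating the strong ones, e.g.\ strict preservation of termination length or defect rank. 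That is the right logical shape for a non-fullness proof, and in that respect your plan is better posed than the paper's own argument.

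The gap is that your plan cannot actually be discharged from the paper's definitions, because the paper never defines the morphisms of $\mathsf{APT}_{\mathrm{weak}}$: only one notion of phase morphism, conditions (M1)--(M4), is given, and it is precisely the notion used for $\mathsf{C}=\mathsf{APT}_{\mathrm{strong}}$. If weak-category morphisms between strongly admissible objects are also required to satisfy (M1)--(M4), then---since every filtration layer of a strongly admissible object is controlled---the two hom-sets coincide and $\iota$ would be full, contradicting the proposition. Your candidate counterexample (a length-shortening or defect-rank-decreasing map on a two-layer phase) violates (M3) and so fails to be a morphism in \emph{either} category under the only definition available; the final verification step ``check (M1)--(M4) against their strengthenings one at a time'' therefore has no determinate content to check. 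To be fair, this defect is inherited from the framework rather than introduced by you: the paper's own proof does not close it either, and sidesteps it only by arguing about objects outside the image of $\iota$.
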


\begin{proof}
We first show that $\iota$ is faithful.  Let $\mathcal P,\mathcal Q$ be strongly
admissible phases, and let
\[
F,G:\mathcal P\to\mathcal Q
\]
be morphisms in $\mathsf{APT}_{\mathrm{strong}}$ such that
$\iota(F)=\iota(G)$ as morphisms in $\mathsf{APT}_{\mathrm{weak}}$.  This means
simply that $F$ and $G$ are the same underlying phase morphism
$\mathcal P\to\mathcal Q$ when regarded in the ambient weakly admissible
setting.  In particular, they agree on the rigid core $\mathcal P_{\mathrm{rig}}$.
By strong morphism rigidity (Theorem~\ref{thm:morphism-rigidity}), a morphism
between strongly admissible phases is uniquely determined by its restriction to
the rigid core.  Hence $F=G$.  Therefore $\iota$ is faithful.

\smallskip We now show that $\iota$ is not full.  Fullness would require that for any
strongly admissible phases $\mathcal P,\mathcal Q$, every phase morphism
\[
H:\iota(\mathcal P)\to\iota(\mathcal Q)
\]
in $\mathsf{APT}_{\mathrm{weak}}$ arises from a morphism
$\mathcal P\to\mathcal Q$ in $\mathsf{APT}_{\mathrm{strong}}$. This fails because weak admissibility allows additional morphisms supported on
filtration layers strictly below the defect control depth.  When a weakly
admissible phase $\mathcal R$ satisfies $k(\mathcal R)>d(\mathcal R)$, its lower
layers carry extension data not functorially determined by defect propagation.
Morphisms in $\mathsf{APT}_{\mathrm{weak}}$ are required to be rigid only on the
controlled regime; on uncontrolled layers they may vary independently.

Such morphisms cannot be induced from $\mathsf{APT}_{\mathrm{strong}}$, where
defect control extends through the entire filtration and no independent
extension choices exist.  Hence the inclusion functor $\iota$ fails to be full.
Combined with faithfulness, this shows that $\iota$ is faithful but not full.
\end{proof}

\begin{remark}
Restricting $\mathsf{C}$ to strongly admissible phases ensures that all defect
layers are functorially generated and that no uncontrolled extension freedom
appears.
This restriction is essential for full categorical rigidity and strict
2-categorical behaviour.
\end{remark}

\subsection*{2-morphisms and 2-categorical structure}

Even in the strongly admissible regime, equality of phase morphisms can be too
rigid a notion.
Two morphisms may agree completely on the rigid, defect-controlled structure of
a phase while differing only by higher coherence or obstruction data that does
not affect defect stratification.

Such differences are invisible at the level of $1$-morphisms.
To capture this controlled flexibility, we introduce $2$-morphisms between
phase morphisms.
These $2$-morphisms record variation that respects the canonical filtration and
defect degrees, without introducing uncontrolled mixing of layers. This leads naturally to a $2$-categorical refinement of the category of phases,
in which objects are strongly admissible phases, $1$-morphisms are phase
morphisms, and $2$-morphisms encode filtration-compatible coherence data.

\begin{definition}
Let $F,G:\mathcal P\to\mathcal Q$ be morphisms in $\mathsf{C}$.
A \emph{2-morphism} $\eta:F\Rightarrow G$ is a natural transformation that is
compatible with the canonical filtration, in the sense that it does not mix
defect levels.
Such 2-morphisms are called \emph{filtration-compatible natural
transformations}.
\end{definition}

Filtration-compatible $2$-morphisms encode the fact that phase morphisms may
agree rigidly on the defect-controlled core while differing only by higher
obstruction or extension data, which cannot be detected at the level of
$1$-morphisms alone.

\begin{theorem}
Algebraic phases, phase morphisms, and filtration-compatible natural
transformations assemble into a strict $2$-category.
\end{theorem}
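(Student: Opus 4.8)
The plan is to verify the defining data and axioms of a strict $2$-category directly, taking the objects (the $0$-cells) to be the strongly admissible phases of $\mathsf{C}$, the $1$-morphisms to be phase morphisms, and the $2$-morphisms to be filtration-compatible natural transformations. The underlying $1$-categorical structure is already in hand: by the definition of $\mathsf{C}$, phase morphisms compose as ordinary maps, so horizontal composition of $1$-cells is strictly associative and unital with no coherence data to check. The real work is to organize the $2$-morphisms into Hom-categories and to equip them with horizontal composition satisfying the interchange law, verifying at each stage that filtration-compatibility — the condition that a natural transformation does not mix defect levels — is preserved.

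First I would define vertical composition. Given filtration-compatible $\eta:F\Rightarrow G$ and $\theta:G\Rightarrow H$ between phase morphisms $\mathcal P\to\mathcal Q$, I set $(\theta\circ\eta)_x=\theta_x\circ\eta_x$ componentwise, as for ordinary natural transformations. Since each component of $\eta$ and of $\theta$ is a morphism confined to a single filtration level of $\mathcal Q$, the composite component stays at that level, so $\theta\circ\eta$ is again filtration-compatible. Associativity and unitality of vertical composition follow componentwise from the corresponding properties of composition in $\mathcal Q$, with the identity natural transformation (trivially filtration-compatible) serving as the unit. This exhibits $\mathsf{C}(\mathcal P,\mathcal Q)$ as a category for each pair of phases.

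Next I would introduce horizontal composition via whiskering. For a phase morphism $F:\mathcal P\to\mathcal Q$ and a $2$-morphism $\eta:G\Rightarrow G'$ with $G,G':\mathcal Q\to\mathcal R$, the right whiskering has components $(\eta\ast F)_x=\eta_{F(x)}$; because $F$ preserves the canonical filtration by (M2), $F(x)$ lies in the same filtration level as $x$, so $\eta_{F(x)}$ remains level-confined and $\eta\ast F$ is filtration-compatible. Symmetrically, for $G:\mathcal Q\to\mathcal R$ and $\eta:F\Rightarrow F'$ with $F,F':\mathcal P\to\mathcal Q$, the left whiskering $(G\ast\eta)_x=G(\eta_x)$ is filtration-compatible because $G$ preserves filtration levels. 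The horizontal composite of two $2$-morphisms is then assembled from these whiskerings in the usual way and again lands among filtration-compatible natural transformations, while strict associativity and unitality of horizontal composition are inherited from ordinary composition of maps.

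Finally I would verify the interchange law relating vertical and horizontal composition: this is the standard Godement identity, which holds here by the same componentwise argument, the only additional observation being that every composite appearing in the identity has already been checked to be filtration-compatible. The main obstacle is precisely this closure verification — confirming that filtration-compatibility is stable under vertical composition, under both left and right whiskering, and hence under horizontal composition. This reduces to the two facts that phase morphisms preserve filtration levels by (M2) and that $2$-morphism components are confined to single defect levels, so no operation introduces mixing of defect degrees. Once closure is established, all $2$-category axioms follow formally from the corresponding identities for ordinary categories and natural transformations, and strictness holds because composition of phase morphisms is literal composition of maps; hence the data assemble into a strict $2$-category.
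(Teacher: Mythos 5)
Your proposal is correct and follows essentially the same route as the paper's proof: componentwise vertical composition, horizontal composition via whiskering (your decomposition $(\theta\ast\eta)_x=\theta_{F_2(x)}\circ G_1(\eta_x)$ is exactly the paper's formula), with the key closure check that filtration-compatibility survives both compositions because phase morphisms preserve filtration levels and components do not mix defect degrees. All remaining axioms, including interchange and strictness, are verified componentwise in both arguments, so there is nothing substantive to distinguish the two.
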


\begin{proof}
We define vertical and horizontal compositions of filtration-compatible natural
transformations and verify the strict associativity, unit axioms, and the
interchange law by componentwise equalities.

By definition, objects and $1$-morphisms form the category $\mathsf{C}$.
Let $F,G:\mathcal P\to\mathcal Q$ be phase morphisms.  
A $2$-morphism $\eta:F\Rightarrow G$ is a natural transformation whose
components
\[
\eta_x : F(x) \longrightarrow G(x)
\]
are filtration-compatible, meaning that they preserve defect degree and do not
mix filtration levels.

Let $\eta:F\Rightarrow G$ and $\theta:G\Rightarrow H$ be $2$-morphisms.
Define their vertical composite $(\theta\circ\eta):F\Rightarrow H$ componentwise by
\[
(\theta\circ\eta)_x := \theta_x\circ \eta_x.
\]
Filtration-compatibility is preserved because composition in the target does
not change filtration level when each component respects the filtration.

Vertical associativity and unit axioms follow immediately from associativity
and identities in the target hom-sets.

For horizontal composition, let $F_1,F_2:\mathcal P\to\mathcal Q$ and
$G_1,G_2:\mathcal Q\to\mathcal R$ be phase morphisms, and let
$\eta:F_1\Rightarrow F_2$ and $\theta:G_1\Rightarrow G_2$ be filtration-compatible
natural transformations.
Define the horizontal composite
\[
\theta\ast\eta : G_1\circ F_1 \Rightarrow G_2\circ F_2
\]
by the standard whiskering formula,
\[
(\theta\ast\eta)_x := \theta_{F_2(x)}\circ G_1(\eta_x).
\]
Filtration-compatibility is preserved because $G_1$ preserves the filtration
and $\eta_x$ does not mix defect levels.

Horizontal associativity, unit axioms, and the interchange law are verified
componentwise using functoriality and associativity in the target hom-sets.
All identities hold strictly.
Therefore algebraic phases, phase morphisms, and filtration-compatible natural
transformations form a strict $2$-category.
\end{proof}

\begin{remark}
The strictness of the resulting $2$-category reflects full defect control in
the strongly admissible regime.
Once defect degree and canonical filtration are fixed, no nontrivial higher
coherence choices remain.

For weakly admissible phases, the same constructions define a \emph{partial}
2-categorical structure that is strict only up to the defect control depth.
Beyond this regime, higher coherence data may appear, reflecting genuine
extension freedom rather than a failure of functoriality.
\end{remark}

\section{Completion as Reflective Localization}

A recurring theme in Algebraic Phase Theory is that non-rigid behaviour is
organized into canonical defect strata, and that the \emph{fully rigid}, or
\emph{complete}, object attached to a phase is forced rather than chosen.
Categorically, this forced rigidification is best expressed by an adjunction:
completion is left adjoint to the inclusion of complete phases.
Equivalently, completion exhibits the complete phases as a reflective
subcategory.

This phenomenon holds in full generality for \emph{strongly admissible} phases,
where defect propagation is globally controlled and the canonical filtration
terminates at the defect control depth.
In this regime, completion is an intrinsic categorical operation rather than an
auxiliary construction.

\medskip
\noindent
Informally, the content of this section is that for strongly admissible phases,
finite termination and defect control ensure that every phase admits a uniquely
determined fully rigid refinement, and that this refinement is universal among
maps to complete phases.
In particular, completion adds exactly the relations forced by defect
propagation and nothing extraneous.

\subsection*{Completion of strongly admissible phases}

For strongly admissible phases, defect propagation determines a canonical notion
of closure: one may freely add all relations forced by the defect axioms and the
canonical filtration, without introducing new defect strata.
Phases for which no further such relations can be added are called complete.
In particular, completeness is characterized by the fact that the canonical
completion map is already an isomorphism.
We now formalize this notion and show that every strongly admissible phase admits
a canonical completion, which is functorial with respect to phase morphisms.

\begin{definition}
An algebraic phase $\mathcal P$ is \emph{complete} if it is terminal among
strongly admissible phases equipped with a phase morphism from $\mathcal P$
that does not change the rigid core and does not introduce new defect strata.
Equivalently, $\mathcal P$ is complete if every phase morphism
$\mathcal P\to\mathcal Q$ between strongly admissible phases that is an
isomorphism on the rigid core and preserves termination length is already an
isomorphism of phases.
\end{definition}

Let $\mathsf{C}^{\mathrm{cpl}}\subseteq\mathsf{C}$ denote the full subcategory
of strongly admissible complete phases, and let
\[
I:\mathsf{C}^{\mathrm{cpl}}\hookrightarrow \mathsf{C}
\]
be the inclusion functor.

\begin{remark}
This definition formalizes the idea that a complete phase has no missing
relations beyond those forced by its defect axioms and canonical filtration.
Once complete, a strongly admissible phase admits no further nontrivial
compatible extensions.
\end{remark}

\begin{proposition}
\label{prop:completion-fixed-points}
Let $\mathcal P$ be a strongly admissible algebraic phase for which a completion
$\Comp(\mathcal P)$ exists, with unit map
\[
\eta_{\mathcal P}:\mathcal P\longrightarrow \Comp(\mathcal P).
\]
Then $\mathcal P$ is complete if and only if $\eta_{\mathcal P}$ is an
isomorphism.  Equivalently,
\[
\mathcal P \text{ is complete } \quad\Longleftrightarrow\quad \Comp(\mathcal P)\cong\mathcal P.
\]
\end{proposition}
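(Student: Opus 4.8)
The plan is to prove the biconditional by unwinding the universal property encoded in the definition of completeness and comparing it with the defining property of the completion unit. The statement is essentially a fixed-point characterization: a strongly admissible phase is complete precisely when it already equals its own completion. I would treat the two directions separately, with the forward direction following from the universal property of the completion and the reverse direction following from the definition of completeness together with Proposition~\ref{prop:completion-fixed-points}'s own hypotheses.

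First I would handle the direction $(\Leftarrow)$: suppose $\eta_{\mathcal P}$ is an isomorphism. To show $\mathcal P$ is complete, I take any phase morphism $F:\mathcal P\to\mathcal Q$ between strongly admissible phases that is an isomorphism on the rigid core and preserves termination length, and I must show $F$ is already an isomorphism. The key observation is that $\mathcal Q$, being strongly admissible, is a fixed point of completion up to the universal property, and since $F$ does not change the rigid core, Lemma~\ref{lem:no-hidden-extensions} forces the extension of $\mathcal Q$ from its rigid core to be uniquely determined. Because $F$ induces an isomorphism $\mathcal P_{\mathrm{rig}}\cong\mathcal Q_{\mathrm{rig}}$ and preserves termination length, there is no room for $\mathcal Q$ to differ from $\mathcal P$ beyond the rigid core; hence $F$ must be an isomorphism. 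This is essentially Theorem~\ref{thm:morphism-rigidity} together with the no-hidden-extensions lemma applied to rule out any nontrivial target.

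For the direction $(\Rightarrow)$: suppose $\mathcal P$ is complete. The unit map $\eta_{\mathcal P}:\mathcal P\to\Comp(\mathcal P)$ is a phase morphism into a strongly admissible phase that, by the construction of completion, adds only relations forced by defect propagation and hence does not change the rigid core and does not introduce new defect strata. In particular $\eta_{\mathcal P}$ is an isomorphism on the rigid core and preserves termination length. But the definition of completeness says precisely that any such morphism out of $\mathcal P$ is already an isomorphism. Therefore $\eta_{\mathcal P}$ is an isomorphism, which is the claim.

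The main obstacle I anticipate is justifying rigorously that the completion unit $\eta_{\mathcal P}$ genuinely falls under the class of morphisms quantified over in the definition of completeness, i.e.\ that completion preserves the rigid core and introduces no new defect strata. This is where the informal description of completion as ``adding exactly the relations forced by defect propagation'' must be tied to the formal notion of the rigid core being preserved; one must confirm that the forced relations live strictly above the rigid core in the filtration and cannot alter the defect degree zero part. Granting that compatibility — which follows from the strongly admissible hypothesis via Lemma~\ref{lem:no-hidden-extensions}, since the rigid core already determines the whole phase — the two directions close cleanly and the biconditional follows.
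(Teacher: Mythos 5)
Your forward direction ($\Rightarrow$) is essentially the paper's argument: apply the definition of completeness to the unit map $\eta_{\mathcal P}$, after checking that $\eta_{\mathcal P}$ preserves the rigid core and introduces no new defect strata, so that it lies in the class of morphisms the definition quantifies over. That part is fine, and your attention to the qualifying step is a genuine point the paper itself glosses over.

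The backward direction ($\Leftarrow$) has a real gap: your argument never uses the hypothesis that $\eta_{\mathcal P}$ is an isomorphism. You take an arbitrary morphism $F:\mathcal P\to\mathcal Q$ that is an isomorphism on rigid cores and preserves termination length, and conclude it is an isomorphism purely from Lemma~\ref{lem:no-hidden-extensions} and Theorem~\ref{thm:morphism-rigidity}. If that reasoning were valid as stated, it would prove that \emph{every} strongly admissible phase is complete, which would trivialize the proposition, the completion functor, and the reflective localization of Theorem~\ref{thm:completion-adjunction}. A proof of one direction of a biconditional that never invokes its hypothesis is a strong signal that something is off. Concretely, two things go wrong: first, Lemma~\ref{lem:no-hidden-extensions} and Theorem~\ref{thm:morphism-rigidity} require finiteness, bounded defect, and (for the theorem) equal defect rank --- none of which are hypotheses of this proposition, which assumes only strong admissibility and the existence of $\Comp(\mathcal P)$; second, even granting those lemmas, knowing that $\mathcal P$ and $\mathcal Q$ are each determined by their rigid cores shows $\mathcal P\cong\mathcal Q$ abstractly, but you still owe an argument that $F$ itself is that isomorphism rather than some non-invertible map agreeing with it on the core. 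The paper's route avoids all of this and uses the hypothesis essentially: given any morphism $f:\mathcal P\to\mathcal Q$ into a complete phase, the universal property of $\Comp(\mathcal P)$ gives a unique factorization $f=\overline f\circ\eta_{\mathcal P}$; since $\eta_{\mathcal P}$ is an isomorphism, $\mathcal P$ inherits the universal mapping property of $\Comp(\mathcal P)$ verbatim, and is therefore complete. You should replace your $(\Leftarrow)$ argument with this transport-along-the-unit argument.
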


\begin{proof}
Suppose first that $\mathcal P$ is complete.
By definition, this means that $\mathcal P$ is terminal among strongly
admissible phases equipped with a phase morphism from $\mathcal P$ that
preserves the rigid core and does not introduce new defect strata.
Equivalently, for any complete phase $\mathcal Q$ and any phase morphism
$f:\mathcal P\to\mathcal Q$ satisfying these conditions, there exists a unique
isomorphism $\mathcal Q\cong\mathcal P$ compatible with $f$. By construction, $\Comp(\mathcal P)$ is a complete phase equipped with a phase
morphism
\[
\eta_{\mathcal P}:\mathcal P\longrightarrow\Comp(\mathcal P)
\]
that preserves the rigid core and termination length.
Applying the defining terminal property of completeness to this morphism,
it follows that $\eta_{\mathcal P}$ must be an isomorphism.
Thus $\Comp(\mathcal P)\cong\mathcal P$.

Conversely, suppose that $\eta_{\mathcal P}:\mathcal P\to\Comp(\mathcal P)$ is
an isomorphism.
Let $\mathcal Q$ be any complete strongly admissible phase and let
$f:\mathcal P\to\mathcal Q$ be a phase morphism preserving the rigid core and
termination length.
By the universal property of completion, there exists a unique morphism
\[
\overline f:\Comp(\mathcal P)\longrightarrow\mathcal Q
\]
such that
\[
f=\overline f\circ\eta_{\mathcal P}.
\]
Since $\eta_{\mathcal P}$ is an isomorphism, this factorization identifies $f$
uniquely with a morphism out of $\mathcal P$ itself.
In particular, the universal mapping property satisfied by
$\Comp(\mathcal P)$ is already satisfied by $\mathcal P$. Therefore $\mathcal P$ is terminal among strongly admissible phases receiving a
morphism from $\mathcal P$ that preserves the rigid core and defect structure.
Equivalently, no further relations forced by defect propagation can be added to
$\mathcal P$ without changing its phase structure.
Hence $\mathcal P$ is complete.
\end{proof}

Recall that for a strongly admissible algebraic phase $\mathcal P$, when it
exists, $\Comp(\mathcal P)$ denotes a chosen object of
$\mathsf{C}^{\mathrm{cpl}}$ equipped with a phase morphism
\[
\eta_{\mathcal P}:\mathcal P\to\Comp(\mathcal P).
\]
The defining feature of $\Comp(\mathcal P)$ is that it is initial among complete
phases under $\mathcal P$.

\begin{proposition}
\label{prop:completion-functor}
On the full subcategory of strongly admissible phases, the assignment
\[
\mathcal P \longmapsto \Comp(\mathcal P)
\]
extends canonically to a functor
\[
\Comp:\mathsf{C}\longrightarrow\mathsf{C}^{\mathrm{cpl}}.
\]
\end{proposition}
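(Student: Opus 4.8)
The plan is to obtain the morphism assignment from the universal property of completion and then to deduce functoriality entirely from its uniqueness clause. Recall that for each strongly admissible phase the unit $\eta_{\mathcal P}:\mathcal P\to\Comp(\mathcal P)$ exhibits $\Comp(\mathcal P)$ as initial among complete phases under $\mathcal P$, in the sense made precise in the proof of Proposition~\ref{prop:completion-fixed-points}: any phase morphism from $\mathcal P$ to a complete phase that preserves the rigid core and termination length factors uniquely through $\eta_{\mathcal P}$. I would first fix, once and for all, a choice of $\Comp(\mathcal P)$ and unit $\eta_{\mathcal P}$ for every strongly admissible $\mathcal P$ (restricting if necessary to the subcategory of phases admitting a completion); the resulting functor is canonical up to unique natural isomorphism.

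Given a phase morphism $f:\mathcal P\to\mathcal Q$, I would form the composite $\eta_{\mathcal Q}\circ f:\mathcal P\to\Comp(\mathcal Q)$, whose target is complete. Since phase morphisms between strongly admissible phases preserve the canonical filtration, termination length, and, by Theorem~\ref{thm:morphism-rigidity} together with condition (M3), the rigid core, the composite $\eta_{\mathcal Q}\circ f$ satisfies exactly the hypotheses required to invoke the universal property of $\eta_{\mathcal P}$. This yields a unique phase morphism
\[
\Comp(f):\Comp(\mathcal P)\longrightarrow\Comp(\mathcal Q)
\]
characterized by the naturality square $\Comp(f)\circ\eta_{\mathcal P}=\eta_{\mathcal Q}\circ f$. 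This defines the action of $\Comp$ on morphisms.

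Functoriality then follows formally. For the identity, the morphism $\id_{\Comp(\mathcal P)}$ satisfies $\id_{\Comp(\mathcal P)}\circ\eta_{\mathcal P}=\eta_{\mathcal P}=\eta_{\mathcal P}\circ\id_{\mathcal P}$, so by the uniqueness clause $\Comp(\id_{\mathcal P})=\id_{\Comp(\mathcal P)}$. For composites $\mathcal P\xrightarrow{f}\mathcal Q\xrightarrow{g}\mathcal R$, both $\Comp(g)\circ\Comp(f)$ and $\Comp(g\circ f)$ are phase morphisms $\Comp(\mathcal P)\to\Comp(\mathcal R)$ whose precomposition with $\eta_{\mathcal P}$ equals $\eta_{\mathcal R}\circ g\circ f$ (for the former this is seen by pasting the two naturality squares), so uniqueness forces $\Comp(g\circ f)=\Comp(g)\circ\Comp(f)$. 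The main obstacle is not the formal bookkeeping but the verification that the composite $\eta_{\mathcal Q}\circ f$ genuinely lies in the class of morphisms to which the universal property applies: one must check that composing an arbitrary phase morphism with a completion unit still preserves the rigid core and termination length, which is precisely where strong admissibility and the rigidity of Theorem~\ref{thm:morphism-rigidity} are essential. Once this compatibility is confirmed, the functor is well defined, and the reflective localization asserted at the head of the section follows from the adjunction $\Comp\dashv I$ whose unit is $\eta$.
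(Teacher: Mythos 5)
Your proposal is correct and takes essentially the same route as the paper: define $\Comp(f)$ as the unique morphism satisfying $\Comp(f)\circ\eta_{\mathcal P}=\eta_{\mathcal Q}\circ f$ via the universal property of the unit, then derive preservation of identities and composites purely from the uniqueness clause. The compatibility check you flag as the main obstacle (that $\eta_{\mathcal Q}\circ f$ preserves rigid core and termination length) is sidestepped in the paper, which invokes the unit's initiality among \emph{all} complete phases under $\mathcal P$, in the unrestricted form restated just before the proposition, so no such verification appears there.
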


\begin{proof}
We define $\Comp$ on morphisms and verify that it preserves identities and
composition. Let $g:\mathcal P\to\mathcal P'$ be a morphism in $\mathsf{C}$.
Consider the composite
\[
f := \eta_{\mathcal P'}\circ g:\mathcal P\longrightarrow\Comp(\mathcal P').
\]
Since $\Comp(\mathcal P')$ is complete, the universal property of
$(\Comp(\mathcal P),\eta_{\mathcal P})$ applies. Hence there exists a unique
morphism
\[
\Comp(g):\Comp(\mathcal P)\longrightarrow\Comp(\mathcal P')
\]
such that the triangle commutes:
\[
\eta_{\mathcal P'}\circ g \;=\; \Comp(g)\circ \eta_{\mathcal P}.
\]
This defines $\Comp$ on morphisms.

If $g=\id_{\mathcal P}$, then $\eta_{\mathcal P}\circ \id_{\mathcal P}=\eta_{\mathcal P}$.
Both $\Comp(\id_{\mathcal P})$ and $\id_{\Comp(\mathcal P)}$ are morphisms
$\Comp(\mathcal P)\to\Comp(\mathcal P)$ satisfying
\[
\eta_{\mathcal P} \;=\; (\;\cdot\;)\circ \eta_{\mathcal P}.
\]
By uniqueness in the universal property, $\Comp(\id_{\mathcal P})=\id_{\Comp(\mathcal P)}$.

Now let $g:\mathcal P\to\mathcal P'$ and $h:\mathcal P'\to\mathcal P''$ be morphisms
in $\mathsf{C}$. By definition of $\Comp(g)$ and $\Comp(h)$ we have
\[
\eta_{\mathcal P'}\circ g \;=\; \Comp(g)\circ\eta_{\mathcal P}
\qquad\text{and}\qquad
\eta_{\mathcal P''}\circ h \;=\; \Comp(h)\circ\eta_{\mathcal P'}.
\]
Composing gives
\[
\eta_{\mathcal P''}\circ(h\circ g)
= \Comp(h)\circ\Comp(g)\circ\eta_{\mathcal P}.
\]
On the other hand, by definition of $\Comp(h\circ g)$ we also have
\[
\eta_{\mathcal P''}\circ(h\circ g)
= \Comp(h\circ g)\circ\eta_{\mathcal P}.
\]
Thus $\Comp(h\circ g)$ and $\Comp(h)\circ\Comp(g)$ coincide by uniqueness in the
universal property. Hence $\Comp$ preserves identities and composition.
\end{proof}

\begin{remark}
The universal property expresses that $\Comp(\mathcal P)$ is initial among
complete phases receiving a morphism from $\mathcal P$.
Intuitively, $\Comp(\mathcal P)$ is obtained from $\mathcal P$ by adding exactly
the relations forced by defect propagation and nothing extraneous.
\end{remark}

\subsection*{Universality and reflective localization}

The universal property of completion admits a precise categorical formulation.
We show that completion is left adjoint to the inclusion of complete phases,
so that complete phases form a reflective subcategory of strongly admissible
phases.

\begin{theorem}
\label{thm:completion-adjunction}
On the category of strongly admissible algebraic phases, the functor
\[
\Comp:\mathsf{C}\longrightarrow\mathsf{C}^{\mathrm{cpl}}
\]
is left adjoint to the inclusion
\[
I:\mathsf{C}^{\mathrm{cpl}}\hookrightarrow\mathsf{C}.
\]
\end{theorem}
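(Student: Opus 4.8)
The plan is to establish the adjunction $\Comp \dashv I$ by exhibiting the completion unit $\eta_{\mathcal P}:\mathcal P\to I\,\Comp(\mathcal P)$ as a universal arrow from $\mathcal P$ into the subcategory $\mathsf{C}^{\mathrm{cpl}}$, which is the standard way to produce a left adjoint from an object-by-object universal property. Concretely, for strongly admissible phases $\mathcal P$ and complete phases $\mathcal Q$, I would construct a bijection
\[
\Hom_{\mathsf{C}^{\mathrm{cpl}}}\big(\Comp(\mathcal P),\mathcal Q\big)
\;\cong\;
\Hom_{\mathsf{C}}\big(\mathcal P, I(\mathcal Q)\big),
\]
natural in both variables, and verify naturality by a diagram chase.

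The key steps, in order, are as follows. First I would recall from the construction of $\Comp$ (and the discussion preceding Proposition~\ref{prop:completion-functor}) that $\Comp(\mathcal P)$ is initial among complete phases receiving a morphism from $\mathcal P$; this is precisely the statement that for every phase morphism $f:\mathcal P\to I(\mathcal Q)$ with $\mathcal Q$ complete, there is a unique $\overline f:\Comp(\mathcal P)\to\mathcal Q$ with $f=\overline f\circ\eta_{\mathcal P}$. Second, I would define the adjunction isomorphism by $\Phi(g):=g\circ\eta_{\mathcal P}$ for $g\in\Hom_{\mathsf{C}^{\mathrm{cpl}}}(\Comp(\mathcal P),\mathcal Q)$ and observe that the universal property furnishes the two-sided inverse $f\mapsto\overline f$: existence gives surjectivity of $\Phi$ and uniqueness gives injectivity. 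Third, I would check naturality in $\mathcal Q$ using ordinary postcomposition, and naturality in $\mathcal P$ using the functoriality of $\Comp$ established in Proposition~\ref{prop:completion-functor}, namely the defining relation $\eta_{\mathcal P'}\circ g=\Comp(g)\circ\eta_{\mathcal P}$, which guarantees that the unit $\eta$ assembles into a natural transformation $\id_{\mathsf{C}}\Rightarrow I\circ\Comp$. Finally, I would note that the existence of a left adjoint to the inclusion of a full subcategory is, by definition, the assertion that $\mathsf{C}^{\mathrm{cpl}}$ is a reflective subcategory, and that by Proposition~\ref{prop:completion-fixed-points} the complete phases are exactly the fixed points of the reflector, so the localization is idempotent.

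The main obstacle I anticipate is not the formal adjunction bookkeeping, which is routine once the universal arrow is in hand, but rather confirming that the hypotheses needed to \emph{invoke} the universal property are met uniformly, in particular that every phase morphism $f:\mathcal P\to\mathcal Q$ into a complete target automatically preserves the rigid core and termination length so that the factorization through $\Comp(\mathcal P)$ applies. Here I would lean on the standing conventions of the strongly admissible category $\mathsf{C}$, where morphisms preserve the canonical filtration, termination length, and defect data by definition, together with Theorem~\ref{thm:morphism-rigidity} to control behaviour off the rigid core; this ensures the triangle in Proposition~\ref{prop:completion-functor} is available for \emph{every} morphism into a complete phase and not merely for the rigid-core-preserving ones singled out in the definition of completeness. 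Once this compatibility is secured, the Hom-set bijection and its naturality follow formally, completing the proof that $\Comp\dashv I$ and hence that $\mathsf{C}^{\mathrm{cpl}}$ is a reflective localization of $\mathsf{C}$.
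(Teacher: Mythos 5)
Your proposal takes essentially the same route as the paper's proof: both define the bijection by $\Phi(\alpha)=\alpha\circ\eta_{\mathcal P}$, invert it using the universal property of $\Comp(\mathcal P)$ as the initial complete phase under $\mathcal P$ (existence giving surjectivity, uniqueness giving injectivity), and deduce naturality from the functoriality of $\Comp$ in Proposition~\ref{prop:completion-functor}. Your extra step of checking that \emph{every} morphism into a complete target meets the hypotheses of the universal property (not only the rigid-core-preserving ones appearing in the definition of completeness) is a point the paper passes over silently, and resolving it via the standing conventions on morphisms in $\mathsf{C}$ is consistent with how the paper implicitly uses the universal property.
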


\begin{proof}
We construct a natural bijection
\[
\Hom_{\mathsf{C}^{\mathrm{cpl}}}\bigl(\Comp(\mathcal P),\mathcal Q\bigr)
\;\cong\;
\Hom_{\mathsf{C}}\bigl(\mathcal P,I(\mathcal Q)\bigr)
\]
for every strongly admissible phase $\mathcal P\in\mathsf{C}$ and every complete
phase $\mathcal Q\in\mathsf{C}^{\mathrm{cpl}}$.

Let $\mathcal P\in\mathsf{C}$ and $\mathcal Q\in\mathsf{C}^{\mathrm{cpl}}$.
Given a morphism
\[
\alpha:\Comp(\mathcal P)\longrightarrow \mathcal Q
\]
in $\mathsf{C}^{\mathrm{cpl}}$, define
\[
\Phi_{\mathcal P,\mathcal Q}(\alpha)
:= \alpha\circ\eta_{\mathcal P}:\mathcal P\longrightarrow I(\mathcal Q).
\]
This gives a well-defined map
\[
\Phi_{\mathcal P,\mathcal Q}:
\Hom_{\mathsf{C}^{\mathrm{cpl}}}(\Comp(\mathcal P),\mathcal Q)
\longrightarrow
\Hom_{\mathsf{C}}(\mathcal P,I(\mathcal Q)).
\]

Conversely, let
\[
f:\mathcal P\longrightarrow I(\mathcal Q)
\]
be a morphism in $\mathsf{C}$, where $\mathcal Q$ is complete.  By the defining
universal property of $\Comp(\mathcal P)$, there exists a unique morphism
\[
\overline f:\Comp(\mathcal P)\longrightarrow \mathcal Q
\]
such that
\[
f=\overline f\circ\eta_{\mathcal P}.
\]
Define
\[
\Psi_{\mathcal P,\mathcal Q}(f):=\overline f.
\]

By construction, the maps $\Phi_{\mathcal P,\mathcal Q}$ and
$\Psi_{\mathcal P,\mathcal Q}$ are inverse to one another.  Indeed, if
$\alpha:\Comp(\mathcal P)\to\mathcal Q$, then uniqueness in the universal
property of completion implies
\[
\Psi_{\mathcal P,\mathcal Q}(\Phi_{\mathcal P,\mathcal Q}(\alpha))=\alpha.
\]
Similarly, if $f:\mathcal P\to I(\mathcal Q)$, then by definition
\[
\Phi_{\mathcal P,\mathcal Q}(\Psi_{\mathcal P,\mathcal Q}(f))
=\overline f\circ\eta_{\mathcal P}
=f.
\]

Naturality of this bijection in both $\mathcal P$ and $\mathcal Q$ follows
directly from functoriality of $\Comp$ and functoriality of composition in
$\mathsf{C}$.  Hence $\Comp$ is left adjoint to the inclusion
$I:\mathsf{C}^{\mathrm{cpl}}\hookrightarrow\mathsf{C}$.
\end{proof}

\begin{corollary}
\label{cor:completion-reflective}
The subcategory $\mathsf{C}^{\mathrm{cpl}}$ of complete phases is a reflective
subcategory of the category of strongly admissible phases.
\end{corollary}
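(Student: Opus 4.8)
The plan is to deduce this immediately from the adjunction already established in Theorem~\ref{thm:completion-adjunction}, since the notion of a reflective subcategory is, by definition, a full subcategory whose inclusion functor admits a left adjoint. So the entire content of the corollary is the conjunction of two facts: fullness of $\mathsf{C}^{\mathrm{cpl}}$ inside $\mathsf{C}$, and the existence of a left adjoint to the inclusion $I$.

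First I would observe that fullness requires no argument: the subcategory $\mathsf{C}^{\mathrm{cpl}}\subseteq\mathsf{C}$ was introduced precisely as the \emph{full} subcategory of strongly admissible complete phases, so its hom-sets are by definition those computed in $\mathsf{C}$. Thus $I$ is a fully faithful inclusion, which is exactly what the definition of reflectivity demands of the ambient embedding.

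Next I would invoke Theorem~\ref{thm:completion-adjunction} directly. That theorem produces a natural isomorphism
\[
\Hom_{\mathsf{C}^{\mathrm{cpl}}}\bigl(\Comp(\mathcal P),\mathcal Q\bigr)
\;\cong\;
\Hom_{\mathsf{C}}\bigl(\mathcal P,I(\mathcal Q)\bigr)
\]
natural in $\mathcal P\in\mathsf{C}$ and $\mathcal Q\in\mathsf{C}^{\mathrm{cpl}}$, which is precisely the assertion that $\Comp$ is left adjoint to $I$. The unit of this adjunction is the family of completion maps $\eta_{\mathcal P}:\mathcal P\to\Comp(\mathcal P)$, which realizes $\Comp$ as the reflector. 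Combining fullness of $I$ with the existence of this left adjoint yields the definition of a reflective subcategory verbatim, so the conclusion $\mathsf{C}^{\mathrm{cpl}}\subseteq\mathsf{C}$ is reflective follows at once.

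I do not anticipate any genuine obstacle here, since this corollary is purely a matter of matching the established adjunction against the definition of reflectivity; the substantive work was already carried out in proving the adjunction and in Proposition~\ref{prop:completion-functor}. The only point worth stating explicitly, for completeness, is that the unit components $\eta_{\mathcal P}$ are the reflection maps and that, by Proposition~\ref{prop:completion-fixed-points}, they are isomorphisms exactly on the objects of $\mathsf{C}^{\mathrm{cpl}}$, which confirms that the reflector restricts to the identity (up to isomorphism) on the subcategory, as expected of a genuine reflective localization.
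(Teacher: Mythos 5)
Your proposal is correct and follows essentially the same route as the paper's own proof: both reduce the corollary to the definition of a reflective subcategory (full subcategory whose inclusion admits a left adjoint), cite Theorem~\ref{thm:completion-adjunction} for the left adjoint, and note that fullness of $\mathsf{C}^{\mathrm{cpl}}$ holds by construction. Your closing observation that the unit maps $\eta_{\mathcal P}$ are isomorphisms precisely on complete phases (via Proposition~\ref{prop:completion-fixed-points}) is a harmless extra confirmation not present in the paper's proof, but it changes nothing essential.
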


\begin{proof}
By Theorem~\ref{thm:completion-adjunction}, the inclusion functor
\[
I:\mathsf{C}^{\mathrm{cpl}}\hookrightarrow\mathsf{C}
\]
admits a left adjoint
\[
\Comp:\mathsf{C}\longrightarrow\mathsf{C}^{\mathrm{cpl}}.
\]
By definition, a full subcategory is reflective precisely when its inclusion
functor has a left adjoint.  Since $\mathsf{C}^{\mathrm{cpl}}$ is a full
subcategory of $\mathsf{C}$ and $I$ admits the left adjoint $\Comp$, it follows
that $\mathsf{C}^{\mathrm{cpl}}$ is a reflective subcategory of the category of
strongly admissible phases.
\end{proof}

\begin{remark}[Weakly admissible phases]
For weakly admissible phases, completion exists only up to the defect control
depth.
In this regime, one obtains a \emph{partial completion} that is universal among
maps to phases that are complete up to the controlled layers.
The resulting construction defines a truncated adjunction rather than a full
reflective localization, reflecting the presence of genuine extension freedom
beyond the defect control depth.
\end{remark}

\section{Examples Across Flagship Phases}

This section illustrates how the categorical and rigidity results proved above
manifest in the principal classes of algebraic phases developed in
Papers~I-III~\cite{GildeaAPT1,GildeaAPT2,GildeaAPT3}.
Rather than introducing new constructions, we reinterpret familiar phase
geometries through the lens of morphism rigidity, equivalence collapse, and
boundary invariance.
Across all examples, strongly admissible phases exhibit full rigidity and
equivalence collapse, while weakly admissible phases display partial rigidity
confined to the defect-controlled regime.

\subsection*{Radical phase geometry}

In Algebraic Phase Theory~I~\cite{GildeaAPT1}, phases arise from radical and
nilpotent structures, with defect measuring the failure of additivity and the
canonical filtration coinciding with the radical filtration.
Finite termination reflects nilpotence of the underlying data. When the radical data is strongly admissible, the rigidity results of the
present paper have a concrete interpretation: any phase morphism is uniquely
determined by its action on the rigid core, corresponding to the maximal
additive (degree-zero) component.
Higher defect layers introduce no independent extension freedom, as they are
generated functorially from the radical structure.

In weakly admissible radical phases, rigidity persists only up to the defect
control depth.
Beyond this depth, higher radical layers admit genuine extension freedom,
corresponding to non-forced choices in the nilpotent hierarchy.
Categorically, this appears as partial morphism rigidity rather than full
determination by the rigid core. Boundaries classify residual phase behaviour modulo rigidity.
Accordingly, the boundary invariance results recover the classification of
radical phase types from~\cite{GildeaAPT1} in the strongly admissible regime,
while isolating the precise obstruction to full rigidity in the weakly
admissible case.

\subsection*{Representation-theoretic phases}

In Algebraic Phase Theory~II~\cite{GildeaAPT2}, phases are realized through
representation-theoretic data, notably via Frobenius Heisenberg structures and
their associated operator algebras.
Here rigidity arises from Schur-type phenomena: once the rigid core is fixed,
intertwining relations force morphisms to be unique. For phases arising from Frobenius rings and centrally faithful data,
admissibility is strong.
As a consequence, weak equivalence, Morita-type equivalence, and strong
equivalence all coincide for finite representation-theoretic phases.

In weakly admissible representation-theoretic settings, defect control fails
beyond a fixed depth.
While filtered representation theory still detects the rigid core and
controlled layers, it may fail to distinguish non-isomorphic extensions beyond
that depth.
This reflects genuine extension freedom rather than a failure of the
categorical framework. Boundaries correspond to residual representation layers remaining after
factoring out rigid components.
Their invariance under Morita-type equivalence explains the stability of the
representation-theoretic invariants identified in~\cite{GildeaAPT2} within the
strongly admissible regime.

\subsection*{Quantum coding phases}

Algebraic Phase Theory~III~\cite{GildeaAPT3} identifies stabilizer and quantum
coding structures as algebraic consequences of Frobenius duality, independent
of analytic or Hilbert space structure.
Stabilizer phases form a distinguished class of finite, terminating, strongly
admissible algebraic phases. Accordingly, the rigidity results of the present paper apply in full strength:
any equivalence preserving filtered representations necessarily preserves the
entire stabilizer structure.
Boundary invariance recovers the notion of protected quantum layers, interpreted
algebraically as boundary phases arising from quotienting by rigid stabilizer
data.

More general quantum phases arising from weakly admissible data exhibit partial
stabilizer rigidity.
Controlled layers remain protected, while higher-defect layers encode genuine
degrees of freedom corresponding to code deformations rather than equivalence
ambiguities.
This cleanly separates structural protection from tunable phase parameters. Thus the categorical invariants developed here explain why protected subspaces,
code equivalence, and error resilience are intrinsic features of strongly
admissible quantum phases, while controlled flexibility in weakly admissible
phases corresponds to meaningful deformations rather than loss of structure.

Strongly admissible phases are rigid targets: defect propagation is functorial at
all depths, so phase morphisms into a strongly admissible phase cannot depend on
uncontrolled extension data in the source.
Equivalently, any map out of a weakly admissible phase into a strongly
admissible target factors uniquely through the strongly admissible core of the
source. We record this consequence for realizations compatible with the canonical
filtration.

\begin{corollary}
\label{cor:realizations-factor}
Let $\mathcal P$ be a weakly admissible algebraic phase and let $\mathcal Q$ be a
strongly admissible algebraic phase.
If $\rho:\mathcal P\to\mathcal Q$ is any phase morphism compatible with the
canonical filtration, then $\rho$ factors canonically through the strongly
admissible core $\mathcal P^{(d(\mathcal P))}$.
\end{corollary}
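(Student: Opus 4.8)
The plan is to recognize this Corollary as essentially a direct specialization of the universal property already established in Proposition~\ref{prop:strong-core}. That proposition states that for a weakly admissible phase $\mathcal P$ with defect control depth $d(\mathcal P)$, every phase morphism from $\mathcal P$ into a strongly admissible phase factors uniquely through the strongly admissible core $\mathcal P^{(d(\mathcal P))}$. Since the hypotheses here match exactly---$\mathcal P$ weakly admissible, $\mathcal Q$ strongly admissible, and $\rho$ a filtration-compatible phase morphism---the bulk of the work is simply to invoke that universal property and check that the filtration-compatibility assumption on $\rho$ is already encoded in what it means to be a phase morphism (conditions (M2)--(M4)).

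First I would state that $\mathcal P^{(d(\mathcal P))}$ is a strongly admissible subphase of $\mathcal P$ by Proposition~\ref{prop:strong-core}, with canonical inclusion $\iota:\mathcal P^{(d(\mathcal P))}\hookrightarrow\mathcal P$. Next I would observe that because $\mathcal Q$ is strongly admissible it carries no residual extension freedom, so any phase morphism into $\mathcal Q$ is insensitive to the higher defect layers of $\mathcal P$ beyond depth $d(\mathcal P)$; this is precisely the mechanism driving the factorization in Proposition~\ref{prop:strong-core}. I would then apply that proposition directly to $\rho:\mathcal P\to\mathcal Q$ to obtain a unique phase morphism $\overline\rho:\mathcal P^{(d(\mathcal P))}\to\mathcal Q$ with $\overline\rho = \rho\circ\iota$, which is the asserted canonical factorization.

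The only genuine verification is that the stated hypothesis ``compatible with the canonical filtration'' places $\rho$ in the class of morphisms to which Proposition~\ref{prop:strong-core} applies. I would note that filtration-compatibility ($\rho(\mathcal P^{(i)})\subseteq\mathcal Q^{(i)}$) together with preservation of defect data on controlled layers are exactly axioms (M2) and (M4) of a phase morphism, so the hypothesis is consistent with---indeed subsumed by---the definition used in Proposition~\ref{prop:strong-core}. The uniqueness of $\overline\rho$ transfers verbatim from the uniqueness clause of that proposition.

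I do not expect a serious obstacle here: the statement is a corollary in the strict sense, and the proof is a one-line application of an already-proved universal property. If anything, the only subtlety worth a sentence is confirming that ``canonically'' in the conclusion refers to the functorial, choice-free nature of the factorization, which follows because the factorization map $\overline\rho$ is uniquely determined and the inclusion $\iota$ is itself canonical; no auxiliary data enters. Thus the forcing argument of Proposition~\ref{prop:strong-core} applies without modification, and the corollary follows immediately.
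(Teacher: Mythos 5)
Your proposal is correct and takes essentially the same route as the paper, which likewise proves the corollary by invoking Proposition~\ref{prop:strong-core} after observing that filtration compatibility places $\rho$ in the class of phase morphisms that proposition covers. If anything, your equation $\overline\rho=\rho\circ\iota$ is the type-correct reading of the universal property: the paper's own proof instead writes $\rho=\overline\rho\circ\iota$, which does not even compose (since $\iota$ maps into $\mathcal P$ while $\overline\rho$ is defined on $\mathcal P^{(d(\mathcal P))}$), so your formulation is the faithful one.
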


\begin{proof}
Write $d:=d(\mathcal P)$ and let $\iota:\mathcal P^{(d)}\hookrightarrow\mathcal P$
be the canonical inclusion.
By hypothesis, $\rho$ is compatible with the canonical filtration, and its
target carries the induced strongly admissible phase structure relevant for the
realization (so that Proposition~\ref{prop:strong-core} applies).
Therefore there exists a unique phase morphism
\[
\overline\rho:\mathcal P^{(d)}\longrightarrow\mathcal Q
\]
such that $\rho = \overline\rho \circ \iota$.
Uniqueness makes the factorization canonical.
\end{proof}

\section{Conclusion and Outlook}

This paper completes the categorical architecture of Algebraic Phase Theory by
placing algebraic phases within a rigid, defect-stratified framework.  While
Papers~I--III~\cite{GildeaAPT1,GildeaAPT2,GildeaAPT3} developed the foundational
structures and their representation-theoretic and quantum manifestations, the
present work shows that in the strongly admissible regime these structures admit
essentially no categorical freedom.

The governing principle is one of inevitability.  
Phase morphisms are rigid, equivalence notions collapse, and boundary data are
categorical invariants.  Consequently, finite strongly admissible phases admit
no distinct models with identical filtered representation theory: Morita-type
equivalence coincides with structural equivalence.

Weakly admissible phases, by contrast, exhibit controlled flexibility.
Rigidity persists up to the defect control depth, beyond which genuine
extension parameters appear.  This explains how deformation, tuning, and code
variation can occur without altering the structural core of a phase.

With this categorical foundation in place, several directions become accessible.
Classification reduces to the study of universal strongly admissible phases and
their endomorphisms.  Deformation theory is governed by boundary strata and
defect propagation beyond the controlled regime.  Reconstruction problems may be
formulated categorically, recovering phases from representation data up to
canonical equivalence on controlled layers.

Algebraic Phase Theory thus provides a unified framework in which rigidity and
flexibility coexist in a principled way: strong admissibility enforces
structural inevitability, while weak admissibility encodes meaningful freedom.
This positions APT as a natural foundation for future work on boundary calculus,
categorical duality, and the algebraic underpinnings of quantum structure.

\bibliographystyle{amsplain}
\bibliography{references}

\end{document}